\newtheorem{Theorem}{Theorem}[part]
\newtheorem{Definition}{Definition}[part]
\newtheorem{Proposition}{Proposition}[part]
\newtheorem{Assumption}{Assumption}[part]
\newtheorem{Remark}{Remark}[part]
\makeatletter \@addtoreset{equation}{section}
\def\Ac{{\cal A}}
\def\eps{\varepsilon}
\def\eps{\varepsilon}
\def\no{\noindent}
\def\05{\frac{1}{2}}
\def\-1{^{-1}}
\def\1{{1\hspace{-1mm}{\rm I}}}
\def\={\;=\;}
\def\.{\;.}
\title{  }
\author{ }
\def\be{\begin{eqnarray}}
\def\ee{\end{eqnarray}}
\def\b*{\begin{eqnarray*}}
\def\e*{\end{eqnarray*}}
\def\Ac{{\cal A}}
\def\-1{^{-1}}
\def\0.5{\frac{1}{2}}
\def\no{\noindent}
\def\={\;=\;}
\def\.{\;.}
\def\eps{\varepsilon}
\def\1{{\bf 1}}
\title{MFG model with a long-lived penalty at random jump times: application to demand side management \\ for electricity contracts}
\author{Cl\'emence Alasseur \thanks{EDF R\&D and Finance for Energy Market Research Centre (FIME), France, email: \texttt{clemence.alasseur@edf.fr}} \and Luciano Campi\thanks{Department of Mathematics "Federico Enriques", University of Milan, Italy, email: \texttt{luciano.campi@unimi.it}} \and Roxana Dumitrescu\thanks{Department of Mathematics, King's College London, United Kingdom, email: \texttt{roxana.dumitrescu@kcl.ac.uk}} \and Jia Zeng\thanks{Department of Mathematics, King's College London, United Kingdom and 
The University of Hong Kong, Hong Kong, email: \texttt{jia.zeng@kcl.ac.uk}}}
\begin{document}

\maketitle

\begin{abstract}
	We consider an energy system with $n$ consumers who are linked by a Demand Side Management (DSM) contract, i.e.  they agreed to diminish, at  random times, their aggregated power consumption by a predefined volume during a predefined duration. Their failure to deliver the service is penalised via the difference between the sum of the $n$ power consumptions and the contracted target. We are led to analyse a non-zero sum stochastic game with $n$ players, where the interaction takes place through a cost which involves a delay induced by the duration included in the DSM contract. When $n \to \infty$, we obtain a Mean-Field Game (MFG) with random jump time penalty and interaction on the control. We prove a stochastic maximum principle in this context, which allows to compare the MFG solution to the optimal strategy of a central planner. In a linear quadratic setting we obtain an semi-explicit solution through a system of decoupled forward-backward stochastic differential equations  with jumps, involving a Riccati Backward SDE with jumps. We show that it provides an approximate Nash equilibrium for the original $n$-player game for $n$ large. Finally, we propose a numerical algorithm to compute the MFG equilibrium and present several numerical experiments.\medskip
	
	\no {\bf Keywords:} Demand Side Management, Real-Time Pricing, mean-field games, mean-field control, delay, Riccati BSDE with jumps, stochastic maximum principle.
\end{abstract}

\section{Introduction}

Dynamic pricing or real time pricing (RTP) for electricity contracts is a form of demand side management (DSM) and is possible due to the large development of smart meters. Through dynamic pricing, the consumer can optimise her flexibility over time as she is aware of potential scarcity or on the contrary oversupply on the production side. This kind of contracts is indeed a way to enhance active participation of customers to contribute to the balancing of  consumption-production equilibrium which becomes more challenging with the development of intermittent renewable  and the shut down of some thermal production plants. This type of contracts is bound to develop as in Europe, the Clean Energy Package\footnote{https://ec.europa.eu/energy/topics/energy-strategy/clean-energy-all-europeans\_en}, which is central to define European roadmap to decarbonize its energy sector, aims at each final customer being entitled to choose a dynamic electricity price contract by its supplier. In 2019, seven states in European  Union\footnote{see Energy prices and costs in Europe, European Commission, 2019} (Estonia, Finland, Sweden, Spain, Netherlands, Denmark and the United Kingdom) were proposing those type of contracts. Other forms of DSM contracts include interruptible load contracts which are activated by the Transmission System Operator (TSO) to contribute to the balancing of the power system and to maintain grid and system security. Interruptible load  are often contracted by large consumers with industrial process but also by aggregator who represents a large collection of small consumers.

\vspace{0.5cm}
\textbf{Proposed Mean-Field Game model of flexible customers with dynamic pricing}. The aim of our paper is to provide a stylized  model to analyse a system with DSM contracts involving a large fraction of clients. Our model includes both RTP and interruptible load contracts. RTP implies that power consumers are exposed to a variable price, e.g. the spot or the real time price. Interruptible load contracts are often managed by an aggregator which by definition aggregates several consumers. By this contract, the aggregator is committed to reduce the global consumption of its clients' portfolio by a certain amount and for a given duration at specific  moments corresponding to tension on global equilibrium. The aggregator is of course penalized if he does not manage to achieve the reduction of consumption he was committed to perform, that is if the sum of the reactions of all customers he aggregates does not fit the contracted target during the whole duration of the contract. The triggered instants of the interruptible load contract are often decided by the Transmission System Operator (TSO) which is the operator in charge of the balance of the system. Our model does not represent the production side of the power system and the interruptible load contract is considered to be triggered randomly in our model through a jump process. The aggregator is not represented explicitly in our model: the DSM is operated by customers themselves who already agreed with the aggregator on the structure of the contract. The power system is then modeled as a large population of consumers who manage individually their consumption and provides DSM services to minimize the cost of their retail contract. In our model, each consumer is characterised by a two-state variable, their natural power demand $Q_t$ and the accumulated deviation $S_t$ from natural power demand, and a control variable given by the deviation $\alpha_t$ from natural power demand. We propose general dynamics for natural power demand with Brownian and jump components to be able to fit observed natural demand of individual consumers. The objective of each consumer is to minimize its own cost of electricity and inconvenience cost to deviate from natural power demand. Since we consider that a large fraction of consumers are active and involved in this type of DSM contracts, we assume that the real-time price and the respect of the engagement of the interruptible load when activated is impacted by the total deviation of all customers involved in the DSM. Therefore, we are led to the analysis of a non-zero sum stochastic game with $n$ players in a non-cooperative game setting and to the search of Nash equilibria. As $n$ is typically very large, we rely on a Mean Field Game (MFG) approach with \textit{common noise}, \textit{random jump times penalty} and \textit{interaction on the control}. 

\paragraph{Literature review.}
MFG have been introduced simultaneously in \cite{lasry2006jeux-I,lasry2006jeux-II} and \cite{huang2006} as natural limits of symmetric stochastic differential games for a large population of players interacting through a mean field. Since then, a very rich literature has been developed from both perspectives - theory and applications. For a complete treatment of the probabilistic theory of MFG we refer to \cite{CarmonaDelarueBook17}, while for the applications to economics and finance we cite the recent survey \cite{carmona2020applications}. 
This model is an extension to \cite{Alasseur2020} with introduction of jumps in the state variable dynamics and a long lived penalty at random jump times in the cost function, which, in the particular case of a quadratic cost structure and linear  pricing and divergence rules, leads to a linear-quadratic model with jumps and random coefficients. For the particular case of the model studied in \cite{Alasseur2020}, we provide a more detailed analysis from both theoretical and numerical points of view (see the paragraph below for details).
Related works in the case of a Brownian filtration can be found in e.g. \cite{Graber2016}, \cite{yong13}, \cite{yong99}, \cite{sun2014}, \cite{pham2016}, while in the case of jumps we cite the articles \cite{benazzoli2019varepsilon,benazzoli2020mean} for jump-diffusion state variable dynamics and \cite{li2019mean} for a pure-jump MFG model of cryptocurrency mining. Several other MFG models have been developed to study the management of consumers' flexibility with interaction through spot prices represented as inverse functions of the total power demand - see \cite{Couillet21012} for EV (Electrical Vehicle) charging,  \cite{Paola2016} for micro-storages or  \cite{Paola2019} for TCL (Thermostatic Control Load). In \cite{Paola2019}, the production system is  explicitly represented and agents also interact through frequency response. In \cite{Bauso2017}, the authors study a demand side management problem for TCL devices, by considering an MFG formulation which involves interaction on the distribution of their temperature. In \cite{Gomes2020}, power consumers interact through price which is not an inverse demand function, but the result of the equilibrium of the power system.

\paragraph{Main contributions.} Our model provides an analytically and numerically tractable setting to assess questions
related to development and practical implementation of DSM on power system. The associated game is formulated as an MFG with \textit{common noise}, \textit{interaction on the control} and \textit{random jump time penalty} and we provide some conditions under which there exists an unique equilibrium.   We are able to show that the MFG is equivalent to a Mean-Field Type Control (MFC) with suitable pricing and divergence rules. This connection enables to decentralize the aggregator's optimisation problem to the customer's level which is much more tractable in practice, avoiding to rely on heavy communication system. In the particular case where the cost structure is quadratic and the pricing and divergence rules are linear, the mean-field equilibrium is characterised through a decoupled system of Forward Backward Stochastic Differential
Equations with jumps, involving a Riccati BSDE with jumps. We propose a numerical scheme and provide several numerical illustrations together with detailed interpretations of the results. Finally, in the linear-quadratic case, we prove rigorously that the MFG solution is  an $\varepsilon$-Nash equilibrium for the $n$-player game. To the best of our knowledge, very few MFG models with jumps have been studied in the previous literature (see paragraph above) and  this is the first paper which proposes a linear quadratic  model with \textit{common noise}, \textit{jumps} and \textit{random coefficients}, for which complete mathematical and numerical treatments are provided.

\paragraph{Organization of the paper.} The paper is organized as follows: in Section 2 we describe the n-players model. In Section 3, we formulate the mean-field game and the mean-field optimal control problems and characterize the optimal solutions, by providing, under specific conditions, the relation between the two. In Section 4, we study the linear-quadratic setting, in particular we characterize the optimal control through a system of forward backward SDEs with jumps and show that the equilibrium strategy provides an approximated Nash equilibria for the $n$-players game. In Section 6, we describe the numerical approach to compute the mean-field equilibria and provide numerical results.

\section{The Model}

We consider a large number, say $n$, of electricity consumers who have entered a \textit{Demand Side Management} (DSM) contract. This means that they agreed to modify their electricity consumption by postponing, anticipating or even giving up some energy at some inconvenience cost.

\begin{figure}[!ht]
    \centering
    \includegraphics[scale=0.5]{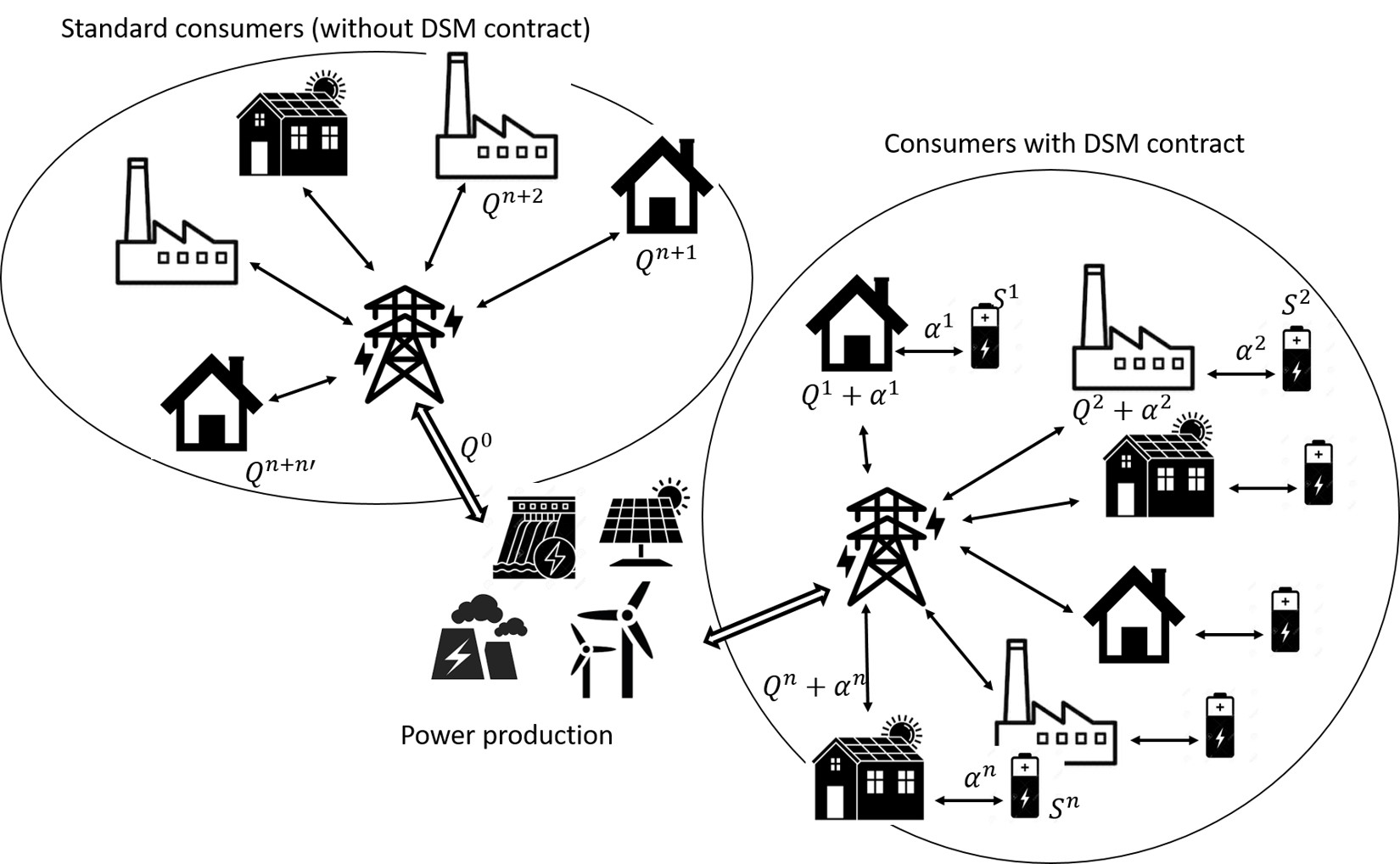}
    \caption{Illustration of the model}
    \label{fig:global_scheme}
\end{figure}
Each consumer involved in the DSM contract $i =1,\ldots,n$ is characterised by two state variables $(Q^i ,S^i)$. The first state variable $Q_t^i$ is the instantaneous electricity  consumption of consumer $i$ which represents the volume of electricity  consumer $i$ needs at time $t$. The consumer is also active: at time $t$ the consumer can decide to deviate by an amount $\alpha^i _t$ from his natural power demand and to finally consume $(Q^i _t + \alpha^i _t )dt$. When $\alpha^i _t>0$ (resp. $<0$), the consumer consumes more (resp. less) that he would naturally need. The accumulated volume of consumption the consumer has agreed to modify from his natural power demand $Q^i _t$ since the beginning of the time period is denoted by $S^i _t$.  Typically, $S$ could represent the  state of charge of a battery but it could also represent the cumulated effort the consumer makes by postponing or anticipating some actions like washing, charging his electrical vehicle (EV) and so forth. \medskip

Another category of consumers is also present in the market that is the \textit{standard consumers} who do not optimise their consumption. Each standard consumer $i = n+1 ,\ldots,n' $ is characterised by one state variable only $Q^i$. The average consumption of the standard consumers is $Q^{{st}} = \frac{1}{n'}\sum_{i=n+1}^{n'}Q^{i}$. \medskip

We consider  a complete probability space $(\Omega, \mathcal{F}, \mathbb{P})$ on which are defined independent Brownian motions $W^0, W^1, \ldots, W^n$, independent Poisson processes $ N^1, \ldots, N^n$ with intensity given by a positive real number $\lambda$ and a counting process $N^0$ with intensity $\lambda^0$ and independent of the individual noises. The process $\widetilde N_t^0 $ (resp.   $\widetilde N_t^{i}$, $i=1,...,n$) represents the compensated martingale and is given by $\widetilde N_t^0 = N_t^0 - \lambda^0 t$ (resp. $\widetilde N_t^{i}= N_t^{i}-\lambda t$, $i=1,...,n$). We  consider $n$ independent identically distributed (i.i.d.)  random variables $x_0^i=(q^i_0, s^i_0)$ which are independent of $W^0$, $W^i$, $N^i$ and $N^0$. We denote by $\mathbb F=(\mathcal{F}_t)_{t\in [0,T]}$ the filtration generated by $(s^i_0, q^i_0), N^0, W^0, N^i, W^i, i=1, \ldots ,n$, and satisfying the usual conditions. We denote by $\mathcal{A}$ the set of $\mathbb{F}$-adapted real-valued processes $\alpha=(\alpha_t)_{t \in [0,T]}$ such that
$ \mathbb{E}\left[ \int_0^T |\alpha_t|^2 dt \right]< \infty$ and $\mathbb{E}[|\alpha_\tau|\mathbf 1_{\tau<\infty}]<\infty$ for all $\mathbb F^0$-stopping times $\tau$ with values in $[0,T]\cup \{+\infty\}$. The latter property grants existence of the optional projection for any $\alpha \in \mathcal A$, that we will need later on in the paper (see Remark \ref{rmk-opt}). \medskip

The dynamics of the two state variables for consumer $i$ are given by: 
\be \nonumber
dQ_t^i &=& \mu(Q^i_t, t)dt + \sigma (Q^i_t, t)dW^i_t+\beta (Q^i_{t^-}, t)dN^i_t + \sigma^{0} (Q^i_t, t)dW^0_t ,\quad Q_0^i = q_0^i ,\\ 
dS_t^i &=& \alpha^i_t dt, \quad S^i _0 = s_0^i.\nonumber
\ee
The dynamic of the average standard consumption $Q^{st}$ is given by: 
\be \nonumber
dQ_t^{st} &=& \mu^{st}(Q^{st}_t, t)dt + \sigma^{st} (Q^{st}_t, t)dW^0_t+\beta^{st} (Q^{st}_t, t)d\Tilde{N}^0_t ,\quad Q_{0}^{st} = q_0^{st} ,\nonumber
\ee
\textcolor{black}{where the coefficients $\mu(t,x)$, $\mu^{st}(t,x)$, $\sigma(t,x)$, $\sigma^{st}(t,x)$, $\beta^{st}(t,x)$, $\beta^{st}(t,x)$ are continuous in $(t,x)$ and Lipschitz continuous with respect to $x$, uniformly in $t$, which ensures the existence of unique strong solutions for the above SDEs.} \\
Furthermore, we denote by $\tilde Q_t^i$ the deseasonalised version of the state variable $Q_t^i$ such that $\tilde Q_t^i = Q_t^i -\mathbb{E}\left[Q_t^i\right]$. $\tilde Q_t^i$ corresponds to the divergence of the consumption to the seasonal consumption ($\mathbb{E}\left[Q_t^i\right]$). \medskip

In the model, the consumer is not restricted to be consumer-only; he may have his own local production (for example he could own solar panels). Therefore, consumer $i$ could be, at time $t$, either in a \textit{consuming mode} meaning he needs electricity ($Q_t^i$ positive) or in a \textit{producing mode} ($Q_t^i$ negative) when he is a net producer.\medskip

Our model represents an interruptible load contract which is activated by the TSO when tension happens on the  production-consumption balance. The aim of this contract is to equalize the total power deviation $\sum_i \alpha_t^i$ to a deterministic contracted target $\bar\alpha$ at random instants $(\tau_k)$ decided by the TSO.   
$(\tau_k)_{k \ge 1}$ is an increasing sequence of stopping times which are the jump times of the counting process $N^0$.  In addition, the interruptible load contract is such that the effort $\bar\alpha$ is maintained during a deterministic duration $\theta$. To avoid trivial cases, let us assume $\theta < T$, with $T$ the finite time horizon.
During interruptible load contract activation, each agent $i$ is penalised when the total response $\sum_i \alpha_t^i$ differs from the requirement $\bar\alpha$. The energy operator cannot monitor $\alpha$ but only $Q+\alpha$ at each consumer because  power meters register the global consumption only. With those measures, the aggregator is able to estimate the deseasonalised consumption $\tilde Q + \alpha$ which corresponds to the divergence of the consumption to the standard consumption (i.e. $\mathbb{E}\left[Q_t^i\right]$). 
The divergence cost is therefore proportional to this quantity $\tilde Q+ \alpha$. \medskip

This divergence cost is the following:
\[ d_t^i =  J_t^\theta  (\tilde Q_t^i + \alpha_t^i - \bar \alpha) f \left(\frac{1}{n}\sum_{j=1}^n  (\tilde  Q_t^j + \alpha_t^j ) - \bar \alpha \right)\]
with $f$ a convex growing function such as $f(0)=0$ and $J_t^\theta$ equal to one during interruptible load contract activation and 0 otherwise.\medskip

To specify $J^\theta$, we introduce a new state variable, $R_t$, which measures the time since the last DSM jump occurred. Once a jump occurs, the process is set to $0$. The process $R_t$ is given by the unique strong solution of the following SDE:
\begin{align*}
dR_t=dt-R_{t^-}dN^0_t,\,\, R_0=2\theta,
\end{align*}
 and $J_t^{\theta}  = \textbf{1}_{R_t \leq \theta}$ in $d_t$. \medskip

In addition, power is not free and the consumers also have to pay their power at a RTP. We consider in this model that consumer are charged at the spot price of the power system  which is an inverse demand function $p$ of the expectation of consumption of all involved consumers.  The \emph{power cost} $c_t$ is therefore, at time $t$, given by: 
\[ c_t^i = (Q_t^i+\alpha_t^i )p\left(\underbrace{\frac{1}{n+n'}\sum_{j=n+1}^{n'}Q^{j}}_{\text{standard consumers}}+\underbrace{\frac{1}{n+n'}\sum_{j=1}^n (Q_t^j+ \alpha_t^j)}_{\text{consumers with DSM contract}}   \right).\] 
The proportion of standard consumers with respect to DSM consumers in the total population is $\pi = \frac{n'}{n+n'}$ and the \emph{power cost} $c_t^i$ can be rewritten as: 
\[ c_t^i = (Q_t^i+\alpha_t^i )p\left(\pi Q^{st}_t + (1-\pi)\frac{1}{n} \sum_{j=1}^n (Q_t^j+ \alpha_t^j)  \right),\]
\[ c_t^i = (Q_t^i+\alpha_t^i )p_t\left(\frac{1}{n}\sum_{j=1}^n (Q_t^j+ \alpha_t^j)  \right),\] with 
\begin{align}\label{price_fct}
p_t(Q) = p\left(\pi Q^{st}_t + (1-\pi)Q\right).
\end{align}
\medskip

To capture the costs induced by the efforts made by the agent when controlling its consumption, we include an \emph{inconvenience cost} $g(\alpha_t, S_t, Q_t)$. Typically, $g$ is convex and increasing in both $\alpha_t$ and $S_t$. This represents that the more the consumer needs to deviate from his natural consumption $Q_t$, the more disrupting. In addition, large accumulated deviations $S_t$ are also penalized. We also consider an additional cost $l(Q_t,\alpha_t)$ to represent demand charge component of the retail tariff structure. Most electricity bills are indeed structured in two parts: first the energy consumption, the amount of energy (kWh) consumed, multiplied by the relevant price of energy and secondly the demand charge: the maximum amount of power (kW) drawn for any given time interval (typically 15 minutes) during the billing period, multiplied by the relevant demand charge. 
 \medskip

Finally, consumers are facing \emph{a terminal cost} $h(S_T)$. Indeed, $S_T \neq 0$ means that the agent did not get during the period $[0,T]$ the exact amount of energy he needed. Therefore, the terminal cost penalises this extra or negative amount of energy the consumer has to manage during the period. \medskip

Finally, each consumer $i \in \{1,\ldots, n\}$ wants to minimise its total expected costs:
\be\nonumber
 \inf_{\alpha^i \in \mathcal A} J^i_n(\alpha) = \inf_{\alpha^i \in \mathcal A} \mathbb{E}\left[\int_{0}^{T} \left( g(\alpha^i_t, S_t^i, Q_t^i)  +l(Q_t^i+\alpha_t^i) + c^i_t + d_t^i \right)dt  + h(S_T^i)\right],
\ee
with $\alpha=(\alpha^{1}, \ldots, \alpha^{n}) $. \medskip

We are led to the analysis of a non-zero sum stochastic game with $n$ players and  to the search of  Nash equilibria:
\begin{Definition}[$\eps$-Nash equilibrium for the $n$-players game]
Let $\eps \ge 0$. We say that a strategy profile $\alpha^\star=(\alpha^{\star,1}, \ldots, \alpha^{\star,n}) \in \Ac^n$ is a  $\eps$-Nash-equilibrium if for each $i$, for any $\beta \in \Ac$:~
	$$
	J^{i}_n(\alpha^{\star,1}, \ldots, \alpha^{\star,i-1},\beta, \alpha^{\star,i+1} , \ldots,\alpha^{\star,n})
	\ge
	J^{i}_n(\alpha^{\star,1}, \ldots,  \alpha^{\star, n}) - \eps
	.$$
\end{Definition}

\section{The mean-field game and mean-field type control problems}

In this part, we consider the mean-field game (MFG) formulation arising at the limit when $n \rightarrow \infty$, which essentially refers to considering the optimization problem of a \textit{representative consumer} and looking for the existence an \textit{equilibrium}. We also consider a different, but still related optimization problem of \textit{mean-field type control} (MFC), which consists in assigning a strategy to all agents at once, such that the resulting crowd behavior is optimal with respect to costs imposed on a \textit{central planner}. Let us first describe the mathematical framework.

Fix a terminal time $T>0$. Let $(\Omega, \mathcal{F}, \mathbb{P})$ be a probability space, equipped with two independent Brownian motions $W^{0}$ and $W$ and two independent Poisson processes $N^0$ and $N$ (also independent of $W$ and $W^0$) with intensities two positive real numbers $\lambda^0$ and $\lambda$. We denote by $\Tilde{N}^0$ (resp.$\Tilde{N}$) the compensated Poisson processes, i.e. $\Tilde{N}^0_t:=N^0_t-\lambda^0t$ (resp. $\Tilde{N}_t:=N_t-\lambda t$). Let $(s_0, q_0)$ be two random variables independent of $W, W^0, N, N^0$. Let $\mathbb{F}= (\mathcal{F}_t)_{t\in [0,T]}$ be the (complete) natural filtration generated by ($W$, $W^0$, $N$, $N^0$, $s_0$, $q_0$). Let $\mathbb{F}^0=(\mathcal{F}^0_t)_{t\in [0,T]}$ be the (complete) natural filtration generated by ($W^0$, $N^0$). Furthermore, the state variables $(Q,S)$ have dynamics
\be
dQ_t &=& \mu (Q_t, t)dt + \sigma (Q_t, t)dW_t + \beta(Q_{t^-}, t)d\Tilde{N}_t+ \sigma^{0} (Q_t, t)dW^0_t ,\quad Q_0= q_0 , \label{dynQ}\\ 
dS_t &=& \alpha_t dt, \quad S _0 = s_0. \label{dynS}
\ee
We denote by $\tilde Q_t = Q_t -\mathbb{E}\left[Q_t\right]$, $t \in [0,T]$.

\begin{Remark} \label{rmk-opt}\normalfont
{\color{black}Given any $\mathcal B([0,T]) \otimes \mathcal F$-measurable process $X$ such that $\mathbb E[|X_\tau | \mathbf 1_{\tau<\infty}]<\infty$ for all $\mathbb F^0$-stopping times $\tau$ with values in $[0,T]\cup \{+\infty\}$, the notation $\widehat{X}$ indicates the optional projection of $X$ with respect to the filtration $\mathbb F^0$, i.e. $\widehat X$ is the unique (up to indistinguishibility) $\mathbb F^0$-optional process such that $\widehat X_\tau 1_{\tau<\infty} = \mathbb E[X_\tau 1_{\tau<\infty} | \mathcal F_\tau ^{0}]$ a.s. for all $\mathbb F^0$-stopping times $\tau$ with values in $[0,T]\cup \{+\infty\}$ (cf. Section 2 in \cite{bremaud1978changes}). We will use this notation throughout the whole paper.}
\end{Remark}

We now move on to the next sub-section, where we give the detailed formulation of the MFG and MFC problems.

\subsection{Formulation of the representative consumer and central planner problems}

We start with the mean-field game problem.
\paragraph{The representative consumer: MFG problem.}
Let $\xi = (\xi_t)_{t\in [0,T]}$ be a given $\mathbb{F}^{0}$-adapted process.
Consider the objective functional
\begin{align}
    J^{MFG}(\alpha; \xi)= &\mathbb{E}\left[\int_{0}^{T} \left( g(\alpha_t, S_t, Q_t)+l(Q_t+\alpha_t)  +(Q_t+\alpha_t )p_t\left( \widehat{Q}_t + {\xi}_t\right)
  \right. \right. \nonumber \\ 
&\left. \left. + \, J_t^\theta (\tilde Q_t + \alpha_t - \bar \alpha) f \left(\widehat{\tilde Q_t} +  \xi_t   - \bar \alpha \right) \right)dt  + h(S_T)\right],
\end{align}
where $\alpha = (\alpha_t)_{t\in [0,T]}$ is an \textit{admissible} control process which belongs to $\mathcal A$, the set of all real-valued $\mathbb F$-adapted processes such that $\mathbb E[\int_0 ^T \alpha_t ^2 dt] < \infty$ and $\mathbb{E}[|\alpha_\tau|\mathbf 1_{\tau<\infty}]<\infty$ for all $\mathbb F^0$-stopping times $\tau$ with values in $[0,T]\cup \{+\infty\}$. The latter requirement guarantees that for any $\alpha \in \mathcal A$ the optional projection $\widehat \alpha$ with respect to $\mathbb F^0$ is well-defined (see Remark \ref{rmk-opt}) and satisfies $\widehat \alpha_\tau 1_{\tau<\infty} = \mathbb E[\alpha_\tau 1_{\tau<\infty} | \mathcal F_\tau ^{0}]$ a.s. for all $\mathbb F^0$-stopping times $\tau$ with values in $[0,T]\cup \{+\infty\}$. Notice that the a-priori estimates of the solution of the equation \eqref{dynQ} imply that the optional projections $\widehat Q$ and $\widehat{\tilde Q}$, appearing in the objective functional above, are also well defined. 

The optimization problem of the representative consumer can be written as follows
\be\nonumber
 V^{MFG}(\xi) =  \inf_{\alpha \in \mathcal A} J^{MFG}(\alpha; \xi).
\label{eq_2.1}
\ee
The goal is to find a process $\alpha^\star = (\alpha_t^\star)_{t\in [0,T]}$ such that
\begin{align}
    J^{MFG}(\alpha^\star; \xi)= V^{MFG}(\xi) \quad\,\,\,\,\text{and}\quad\quad\,\,\,\,{\color{black}\widehat \alpha_t^\star =\xi_t,\,\,\text{a.s. for all }t \in [0,T].}
\end{align}
Such a process $\alpha^\star$ is called a \textit{mean-field Nash equilibrium}.\\

We make the following assumption on the coefficients, which ensures that the problem is well-defined.

\begin{Assumption}
\begin{enumerate}
    \item $g: \mathbb{R}^3 \to \mathbb{R}$, $l: \mathbb{R} \to \mathbb{R}$ and $h:\mathbb{R} \to \mathbb{R}$ have at most quadratic growth and are strictly convex.
    \item $p: \mathbb{R} \to \mathbb{R}, f: \mathbb{R} \to \mathbb{R}$ have at most linear growth.
    \item $g$, $p$, $f$, $l$ and $h$ are differentiable.
\end{enumerate}
\label{assump_2.1}
\end{Assumption}

\vspace{2mm}
We now formulate the mean-field type control problem.
\paragraph{The central planner: MFC problem.}
The mean-field type control problem corresponds to the problem of a \textit{central planner} who wants to optimise the global population (standard and DSM consumers). Standard consumers are only charged at the spot price and cost $l$. The objective functional takes the following form
\begin{align}
J^{C}(\alpha)= &\mathbb{E} \left[(1-\pi)\int_{0}^{T} \left( g(\alpha_t, S_t, Q_t)  + (Q_t+\alpha_t )p_t\left(\widehat{Q}_t + \widehat{\alpha}_t \right)  \right.\right. \nonumber\\ 
& \hspace{1cm} \left.\left. + l(Q_t+\alpha_t)+ J_t^\theta (\tilde Q_t + \alpha_t - \bar \alpha) f \left(\widehat{\tilde Q_t} + \widehat \alpha_t   - \bar \alpha \right) \right) dt +(1-\pi)h(S_T)  \right.\nonumber\\ 
&  \hspace{1cm}  \left.\pi \int_{0}^{T} \left(  Q_t^{st} p_t\left(\widehat{Q}_t + \widehat{\alpha}_t \right)  + l(Q_t^{st}) \right) dt \right].
\end{align}
The optimization problem of the central planner writes as follows:
\begin{align}
V^C = &\inf_{\alpha \in \mathcal{A}} J^{C}(\alpha).
\end{align}

\subsection{Characterization of the MFG equilibria and optimal MFC}

\paragraph{Characterization of the MFG equilibria.}
We provide here a caracterization result of the MFG Nash equilibria. To this purpose, we first define the set $\mathcal{S}^2$ of $\mathbb{F}$-adapted RCLL processes $Y$ such that $E[\sup_{0 \leq t \leq T} Y_t^2]<\infty$ and the set $\mathcal{H}^2$ of $\mathbb{F}$-predictable processes $Z$ such that $\mathbb{E}[\int_0^T Z_s^2ds]<\infty$.

\begin{Theorem}
Let $\hat{\xi}$ be a given $\mathbb{F}^0$-adapted $\mathbb{R}$-valued process and $x_0=(s_0, q_0)$ be a random vector independent of $\mathbb{F}^0$. If there exists a control $\alpha^\star \in \mathcal{A}$ which minimizes the map $\alpha \mapsto J^{MFG}(\alpha, \hat{\xi})$  and if $(S^{\alpha^\star}, Q)$ is the state process associated to the initial condition $x_0$, control $\alpha^\star$ and the dynamics \eqref{dynQ}-\eqref{dynS}, then there exists a unique solution $(Y^\star,q^{0, \star},q^{\star}, \nu^\star, \nu^{0,\star}) \in \mathcal{S}^2 \times (\mathcal{H}^2)^4$ of the following BSDE with jumps:
\begin{align}
    -dY^\star_t=\partial_x g(\alpha, S_t ^{\alpha^\star} , Q_t)dt-q_t^{0, \star}dW_t^0-q^\star_t dW_t - \nu^\star_t d\widetilde N_t- \nu^{0,\star}_t d\widetilde N^0_t, \nonumber \\
    Y^\star_T=\partial_x h(S_T^{\alpha^\star}),\label{adj 1}
\end{align}
satisfying the coupling condition
\begin{align}\label{coupling1}
\partial_\alpha g(\alpha^{\star}_t, S^{\alpha^{\star}}_t, Q_t) +\partial_\alpha l(Q_t+ \alpha^{\star}_t)+{p_t}\left(\widehat{Q}_t+ \hat{\xi}_t\right)+Y^{\star} _t+ J_t^\theta  f \left(\widehat{\tilde Q_t} +  \hat{\xi}_t   - \bar \alpha \right) =0.
\end{align}
Conversely, assume that there exists $\left(\alpha^\star,S^{\alpha^\star}, Y^\star, q^{0,\star},q^{\star}, \nu^{\star}, \nu^{0,\star}\right) \in \mathcal A \times (\mathcal{S}^2)^2 \times (\mathcal{H}^2)^4$ satisfying the coupling condition $\eqref{coupling1}$, as well as the FBSDE $\eqref{dynS}-\eqref{adj 1}$, then $\alpha^\star$ is the optimal control minimizing the map $\alpha \mapsto J^{MFG}(\alpha, \hat{\xi})$ and $S^{\alpha^\star}$ is the optimal trajectory.

If additionally $\widehat \alpha_t ^\star =\hat{\xi}_t$ a.s. for all $t\in [0,T]$, then $\alpha^\star$ is a Mean-field Nash equilibrium. 

\end{Theorem}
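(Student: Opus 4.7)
The approach is to apply a Pontryagin-type stochastic maximum principle adapted to the Brownian--Poisson dynamics and to costs involving $\mathbb{F}^{0}$-optional projections. Since only $S$ is controlled (its drift is $\alpha$) while $Q$ evolves exogenously, a single adjoint process $Y^\star$ attached to $S^{\alpha^\star}$ suffices. I would form the Hamiltonian
$$H(t,s,q,\alpha,y)=\alpha y+g(\alpha,s,q)+l(q+\alpha)+(q+\alpha)\,p_t\bigl(\widehat{Q}_t+\hat{\xi}_t\bigr)+J_t^\theta(\tilde q+\alpha-\bar\alpha)\,f\bigl(\widehat{\tilde Q}_t+\hat{\xi}_t-\bar\alpha\bigr),$$
and note that only $g$ depends on $s$, so $\partial_s H=\partial_x g(\alpha,s,q)$, matching the driver of \eqref{adj 1} with transversality $\partial_x h(S_T^{\alpha^\star})$. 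Existence and uniqueness of $(Y^\star,q^{0,\star},q^\star,\nu^\star,\nu^{0,\star})\in\mathcal S^2\times(\mathcal H^2)^4$ are immediate because the driver is independent of the solution: set $Y^\star_t$ equal to the $\mathcal F_t$-conditional expectation of $\partial_x h(S_T^{\alpha^\star})+\int_t^T\partial_x g(\alpha^\star_s,S_s^{\alpha^\star},Q_s)\,ds$, then apply the martingale representation theorem in the Brownian--Poisson filtration $\mathbb F$ to extract the four integrands in the required spaces.

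For the necessity direction I would perform a convex perturbation argument. Fix $\beta\in\mathcal A$ and set $\alpha^\epsilon=\alpha^\star+\epsilon(\beta-\alpha^\star)$; since $\mathcal A$ is a vector space this stays admissible for all small $\epsilon$, and the state variation $\tilde S_t=\int_0^t(\beta_s-\alpha^\star_s)\,ds$ is absolutely continuous of finite variation while $Q$ is unaffected. Differentiating $\epsilon\mapsto J^{MFG}(\alpha^\epsilon;\hat{\xi})$ at $\epsilon=0$ produces (i) the control-variation terms $\bigl(\partial_\alpha g+\partial_\alpha l+p_t(\widehat{Q}_t+\hat{\xi}_t)+J_t^\theta f(\widehat{\tilde Q}_t+\hat{\xi}_t-\bar\alpha)\bigr)(\beta_t-\alpha^\star_t)$, (ii) the trajectory contribution $\partial_x g\cdot\tilde S_t$, and (iii) the terminal contribution $\partial_x h(S_T^{\alpha^\star})\tilde S_T$; crucially, the $\mathbb F^0$-measurable quantities $\widehat Q$, $\widehat{\tilde Q}$ and $\hat{\xi}$ are frozen under differentiation. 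Applying Itô to $Y^\star_t\tilde S_t$, whose covariation vanishes since $\tilde S$ is continuous of finite variation, and taking expectation combines (ii) and (iii) into $\mathbb E\bigl[\int_0^T Y^\star_t(\beta_t-\alpha^\star_t)\,dt\bigr]$. Optimality of $\alpha^\star$ and the fact that $\beta-\alpha^\star$ ranges over all of $\mathcal A$ then force the integrand to vanish, giving the coupling condition \eqref{coupling1} $dt\otimes d\mathbb P$-a.e.

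The sufficient direction follows the standard convex-analytic template. Under Assumption \ref{assump_2.1}, $g$, $l$, $h$ are convex, while $p_t(\cdot)$ and $J_t^\theta f(\cdot)$ contribute terms that are affine in $\alpha$ because their arguments depend only on the exogenous $\widehat Q_t$, $\widehat{\tilde Q}_t$ and $\hat{\xi}_t$. For any admissible $\alpha$ with state $S^\alpha$, convexity bounds $J^{MFG}(\alpha;\hat{\xi})-J^{MFG}(\alpha^\star;\hat{\xi})$ below by the sum of first-order terms in $(\alpha-\alpha^\star)$ and in $(S^\alpha-S^{\alpha^\star})$. Applying Itô to $Y^\star_t(S_t^\alpha-S_t^{\alpha^\star})$, which again has no covariation contribution, converts the terminal and $\partial_x g$ integrals jointly into $\mathbb E\bigl[\int_0^T Y^\star_t(\alpha_t-\alpha^\star_t)\,dt\bigr]$; the resulting overall integrand coincides with $\partial_\alpha H$ evaluated at $\alpha^\star$ and vanishes by \eqref{coupling1}. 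Hence $\alpha^\star$ minimizes $\alpha\mapsto J^{MFG}(\alpha;\hat{\xi})$; the additional hypothesis $\widehat{\alpha}_t^\star=\hat{\xi}_t$ closes the consistency loop, producing a mean-field Nash equilibrium. The main technical care concerns integrability to justify zero expectation of the martingale parts—granted by the $\mathcal S^2/\mathcal H^2$ framework and the integrability built into $\mathcal A$—and the exchange of differentiation and expectation in the Gâteaux computation, which follows from the growth conditions of Assumption \ref{assump_2.1} combined with the $L^2$-bound on admissible controls.
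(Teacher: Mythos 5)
Your proposal is correct and follows essentially the same route as the paper: a Gâteaux-derivative/convex-perturbation computation, Itô's formula applied to the product of the adjoint process and the (finite-variation) state variation to absorb the trajectory and terminal terms, arbitrariness of the perturbation direction to extract the coupling condition, and convexity of the cost for the converse. The only difference is that you spell out the existence/uniqueness of the adjoint BSDE via conditional expectation and martingale representation (which the paper leaves implicit), a welcome but not substantively different addition.
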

\begin{proof}
We first show that the first implication of the above theorem holds.
\begin{align}\label{eq1}
    & \lim_{\varepsilon \to 0} \frac{J({\alpha}^{\star}+\varepsilon \beta)-J({\alpha}^{\star})}{\varepsilon}\nonumber\\&=\mathbb{E} \left[\int_0^T \left(\left(\partial_\alpha g(\alpha^{\star}_t, S^{\alpha^{\star}}_t, Q_t)+\partial_\alpha l(Q_t+ \alpha^{\star}_t) +{p_t}(\widehat{Q}_t+ {\hat \xi}_t)+J_t^{\theta}f(\widehat{\tilde Q_t} +  {\hat \xi}_t   - \bar \alpha)\right) \beta_t \right.\right.\nonumber\\
    & \left. \left.+\partial_x g(\alpha^{\star}_t, S^{\star}_t, Q_t)\bar{S}_t^\beta \right)dt  +\bar{S}_T^\beta \partial_x h({S}_T^{\alpha^{\star}}) \right],
\end{align}
with $\bar{S}_0^\beta=0$ and $\bar{S}^\beta_{t}=\int_0^t \beta_u du$.\\
By applying It\^o's formula, we get
\begin{align}\label{eq 2}
    \mathbb{E}\left[\bar{S}_T^\beta \partial_s h({S}_T^{\alpha^{\star}})\right]& =\mathbb{E}\left[\bar{S}_T^\beta {Y}_T^{\star} \right]  =\mathbb{E}\left[\int_0^T {Y}_t^{\star} d \bar{S}_t^\beta+\int_0^T \bar{S}_t^\beta d {Y}_t^{\star}\right]\nonumber \\
    & =\mathbb{E}\left[\int_0^T {Y}_t^{\star} \beta_t dt- \int_0^T \partial_x g(\alpha^{\star}_t, S^{\alpha^{\star}}_t, Q_t) \bar{S}_t^\beta dt \right].
\end{align}
From \eqref{eq1} and $\eqref{eq 2}$, we deduce
\begin{align}
\mathbb{E}\left[\int_0^T \left(Y_t^\star+\partial_\alpha g(\alpha^{\star}_t, S^{\alpha^{\star}}_t, Q_t) +\partial_\alpha l(Q_t+ \alpha^{\star}_t) +{p_t}(\widehat{Q}_t+ {\hat \xi}_t)+J_t^{\theta}f(\widehat{\tilde Q_t} +  {\hat \xi}_t-\bar \alpha)\right)\beta_t dt \right]=0.
\end{align}
By arbitrariness of $\beta \in \mathcal{A}$, we get the following coupling condition \eqref{coupling1}, i.e.
\begin{align*}
Y_t^\star+\partial_\alpha g(\alpha^{\star}_t, S^{\alpha^{\star}}_t, Q_t)+\partial_\alpha l(Q_t+ \alpha^{\star}_t) +{p_t}(\widehat{Q}_t+ {\hat \xi}_t)+J_t^{\theta}f(\widehat{\tilde Q_t} +  {\hat \xi}_t - \bar \alpha) =0.
\end{align*}
Conversely, one can easily remark  that, if the coupling condition \eqref{coupling1} is satisfied, as well as the FBSDE $\eqref{dynS}-\eqref{adj 1}$, then, by using similar arguments as above,
\begin{align*}
&\mathbb{E}\left[\int_0^T \left((\partial_\alpha g(\alpha^{\star}_t, S^{\alpha^{\star}}_t, Q_t) +\partial_\alpha l(Q_t+ \alpha^{\star}_t)+{p_t}(\widehat{Q}_t+ {\hat \xi}_t)+J_t^{\theta}f(\widehat{\tilde Q_t} +  {\hat \xi}_t-\bar \alpha))\beta_t+\partial_x g(\alpha^{\star}_t, S^{\alpha^{\star}}_t, Q_t)\bar{S}_t^\beta \right)dt \right. \nonumber \\ & \quad \left. +\bar{S}_T^\beta \partial_x h({S}_T^{\alpha^{\star}}) \right]=0.
\end{align*}
The above relation, together with $\eqref{eq1}$, implies that the Gateaux derivative of $J$  with respect to $\alpha$ is $0$ in $\alpha^\star$ and for all directions $\beta$. This result, together with the  strict convexity of $J$, allows to conclude.
\end{proof}

\paragraph{Characterization of the MFC optimal strategy.}
We now provide a characterization result for the optimal strategy of the MFC problem.

\begin{Theorem}
Let $x_0=(s_0, q_0)$ be a random vector independent of $\mathbb{F}^0$. If there exists a control $\alpha^\star \in \mathcal{A}$ which minimizes the map $\alpha \mapsto J^{MFC}(\alpha)$ and if $(S^{\alpha^\star}, Q)$ is the state process associated to the initial condition $x_0$, control $\alpha^\star$ and the dynamics \eqref{dynQ}-\eqref{dynS}, then there exists a unique solution $(Y^\star,q^{0, \star},q^{\star}, \nu^\star, \nu^{0,\star}) \in \mathcal{S}^2 \times (\mathcal{H}_2)^4$ of the BSDE with jumps
\begin{align}
    -dY^\star_t=\partial_x g(\alpha, S_t ^{\alpha^\star} , Q_t)dt-q_t^{0, \star}dW_t^0-q^\star_t dW_t - \nu^\star_t d\widetilde N_t- \nu^{0,\star}_t d\widetilde N^0_t, \nonumber \\
    Y^\star_T=\partial_x h(S_T^{\alpha^\star}),\label{adj 1-MFC}
\end{align}
satisfying the coupling condition
\begin{align}\label{coupling2}
\partial_\alpha g(\alpha^{\star}_t, S^{\alpha^{\star}}_t, Q_t)+\partial_\alpha l(Q_t+ \alpha^{\star}_t)+{p_t}(\widehat{Q}_t+ \widehat \alpha^{\star}_t)+(\pi Q_t^{st} + (1-\pi)(\widehat Q_t + \widehat \alpha^{\star}_t))\partial_\alpha p(\pi Q_t^{st} + (1-\pi)(\widehat Q_t + \widehat \alpha^{\star}_t))  \nonumber\\
 \quad \quad +Y_t^{\star} + J_t^{\theta}f(\widehat{\tilde Q_t} +  {\widehat \alpha^\star_t} - \bar \alpha)+J_t^{\theta}(\widehat{\tilde Q_t} +{\widehat \alpha^\star_t} - \bar \alpha) \partial_\alpha f(\widehat{\tilde Q_t} +  \textcolor{black}{\widehat \alpha^\star_t} - \bar \alpha) =0,
\end{align}
{\color{black}with $\widehat{\alpha}^\star$ the optional projection of $\alpha^\star$ with respect to $\mathbb F^{0}$.}

Conversely, assume that there exists $\left(\alpha^\star,S^{\alpha^\star}, Y^\star, q^{0,\star},q^{\star}, \nu^{\star}, \nu^{0,\star} \right) \in \mathcal A \times (\mathcal{S}^2)^2 \times (\mathcal{H}^2)^4$ satisfying the coupling condition $\eqref{coupling2}$, as well as the FBSDE $\eqref{dynS}-\eqref{adj 1-MFC}$, then $\alpha^\star$ is the optimal control minimizing the map $\alpha \mapsto J^{MFC}(\alpha)$ and $S^{\alpha^\star}$ is the optimal trajectory.
\end{Theorem}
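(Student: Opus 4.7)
My plan is to follow the same variational template as in the MFG characterization, but to carefully track the extra contributions that arise because the MFC cost functional depends on $\alpha$ not only directly but also through its optional projection $\widehat{\alpha}$. Fix $\alpha^{\star}\in\mathcal A$ optimal and an admissible perturbation $\beta\in\mathcal A$. Since the coefficients are differentiable with at most quadratic (or linear) growth by Assumption \ref{assump_2.1}, I can differentiate $\varepsilon\mapsto J^{C}(\alpha^{\star}+\varepsilon\beta)$ at $\varepsilon=0$ under the expectation. Writing $\bar S^{\beta}_{t}=\int_{0}^{t}\beta_{u}du$, the directional derivative produces the usual $\partial_{\alpha}g\,\beta+\partial_{x}g\,\bar S^{\beta}$ and $\partial_{\alpha}l\,\beta$ terms, plus the terminal $\bar S^{\beta}_{T}\partial_{x}h(S^{\alpha^{\star}}_{T})$ contribution, plus two families of interaction terms: one coming from $(1-\pi)(Q_{t}+\alpha_{t})p_{t}(\widehat Q_{t}+\widehat\alpha_{t})+\pi Q^{st}_{t}p_{t}(\widehat Q_{t}+\widehat\alpha_{t})$, and one coming from the long-lived penalty $J^{\theta}_{t}(\tilde Q_{t}+\alpha_{t}-\bar\alpha)f(\widehat{\tilde Q}_{t}+\widehat\alpha_{t}-\bar\alpha)$. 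Each of these gives a \emph{direct} term proportional to $\beta_{t}$ and a \emph{projected} term proportional to $\widehat\beta_{t}$.

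The key step, and the main new difficulty compared with the MFG proof, is to collapse these $\widehat\beta$ contributions into a single inner product against $\beta$. I will use the defining property of the optional projection recalled in Remark \ref{rmk-opt}: for any $\mathbb F^{0}$-optional process $A$ and any $\mathbb F$-adapted $X$ with suitable integrability, $\mathbb E\bigl[\int_{0}^{T}A_{t}X_{t}\widehat\beta_{t}\,dt\bigr]=\mathbb E\bigl[\int_{0}^{T}A_{t}\widehat X_{t}\widehat\beta_{t}\,dt\bigr]=\mathbb E\bigl[\int_{0}^{T}A_{t}\widehat X_{t}\beta_{t}\,dt\bigr]$, since $A_{t}\widehat X_{t}$ is $\mathbb F^{0}$-optional. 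Applied with $A_{t}\widehat X_{t}=\bigl[(1-\pi)(\widehat Q_{t}+\widehat\alpha^{\star}_{t})+\pi Q^{st}_{t}\bigr]\partial p_{t}(\widehat Q_{t}+\widehat\alpha^{\star}_{t})$ and analogously with $A_{t}\widehat X_{t}=J^{\theta}_{t}(\widehat{\tilde Q}_{t}+\widehat\alpha^{\star}_{t}-\bar\alpha)\partial f(\widehat{\tilde Q}_{t}+\widehat\alpha^{\star}_{t}-\bar\alpha)$, this is exactly what produces the extra terms $(\pi Q^{st}_{t}+(1-\pi)(\widehat Q_{t}+\widehat\alpha^{\star}_{t}))\partial_{\alpha}p(\cdots)$ and $J^{\theta}_{t}(\widehat{\tilde Q}_{t}+\widehat\alpha^{\star}_{t}-\bar\alpha)\partial_{\alpha}f(\cdots)$ appearing in \eqref{coupling2}.

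Next I introduce the adjoint BSDE \eqref{adj 1-MFC}; existence and uniqueness in $\mathcal S^{2}\times(\mathcal H^{2})^{4}$ follows from standard BSDE-with-jumps theory given the quadratic growth of $h$ and $g$ and the $L^{2}$ bounds on $(S^{\alpha^{\star}},Q)$ (via Assumption \ref{assump_2.1}). Applying It\^o's formula to $\bar S^{\beta}_{t}Y^{\star}_{t}$ between $0$ and $T$, exactly as in \eqref{eq 2}, the stochastic integrals vanish in expectation because $\bar S^{\beta}$ is continuous of finite variation and the integrands live in $\mathcal H^{2}$, yielding $\mathbb E[\bar S^{\beta}_{T}\partial_{x}h(S^{\alpha^{\star}}_{T})]=\mathbb E[\int_{0}^{T}Y^{\star}_{t}\beta_{t}dt-\int_{0}^{T}\partial_{x}g(\alpha^{\star}_{t},S^{\alpha^{\star}}_{t},Q_{t})\bar S^{\beta}_{t}dt]$. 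Substituting this back into the Gateaux derivative and using the projection identity above, optimality forces $\mathbb E\bigl[\int_{0}^{T}\Phi_{t}\beta_{t}dt\bigr]=0$ for every $\beta\in\mathcal A$, where $\Phi_{t}$ is precisely the left-hand side of \eqref{coupling2}. Localising $\beta$ shows $\Phi\equiv0$ $dt\otimes d\mathbb P$-a.e., which gives \eqref{coupling2}.

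For the converse, given $(\alpha^{\star},S^{\alpha^{\star}},Y^{\star},q^{0,\star},q^{\star},\nu^{\star},\nu^{0,\star})$ satisfying the FBSDE and \eqref{coupling2}, running the same Itô computation backwards shows that the Gateaux derivative of $J^{C}$ at $\alpha^{\star}$ vanishes in every admissible direction $\beta$. By Assumption \ref{assump_2.1} the maps $g,l,h$ are strictly convex and $p,f$ enter only through the affine-in-$\alpha$ combinations $Q+\alpha$, $\widehat Q+\widehat\alpha$, $\tilde Q+\alpha-\bar\alpha$, $\widehat{\tilde Q}+\widehat\alpha-\bar\alpha$; I will impose (or assume given, consistent with the linear-quadratic application in Section 4) convexity conditions on $p$ and $f$ sufficient to make $\alpha\mapsto J^{C}(\alpha)$ strictly convex, at which point a vanishing Gateaux derivative identifies $\alpha^{\star}$ as the unique minimiser. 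The principal obstacle I anticipate is exactly the bookkeeping of the $\widehat\beta$ interaction terms and verifying the integrability needed to apply the optional-projection duality — this is where the requirement $\mathbb E[|\alpha_{\tau}|\mathbf 1_{\tau<\infty}]<\infty$ built into $\mathcal A$ and Remark \ref{rmk-opt} becomes essential.
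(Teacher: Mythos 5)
Your proposal is correct and follows essentially the same route as the paper: Gateaux differentiation of $J^{C}$, the It\^o computation on $\bar S^{\beta}_t Y^{\star}_t$ to absorb the terminal and $\partial_x g$ terms, and then arbitrariness of $\beta$ for the coupling condition, with strict convexity handling the converse. The only difference is that you make explicit the optional-projection duality $\mathbb E[\int_0^T A_t X_t \widehat\beta_t\,dt]=\mathbb E[\int_0^T A_t \widehat X_t \beta_t\,dt]$ that converts the $\widehat\beta$ interaction terms into the projected coefficients appearing in \eqref{coupling2} — a step the paper uses implicitly when passing from its expression for the derivative to the final coupling condition — and you rightly note that strict convexity of $J^{C}$ (hence the converse) needs convexity-type conditions on $p$ and $f$ beyond the bare Assumption \ref{assump_2.1}, which the paper also glosses over.
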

\begin{Remark}\normalfont
The uniqueness is induced by the strict convexity of the criterion. The proof follows closely the proof of the theorem given for the MFG case, we give it for sake of clarity.
\end{Remark}

\begin{proof}
We start with the first implication. 

We have:
\begin{align}\label{eq3}
    &\lim_{\varepsilon \to 0} \frac{J^{MFC}({\alpha}^{\star}+\varepsilon \beta)-J^{MFC}({\alpha}^{\star})}{\varepsilon} \nonumber \\&=\mathbb{E} \left[\int_0^T \left((1-\pi)\left(\partial_\alpha g(\alpha^{\star}_t, S^{\alpha^{\star}}_t, Q_t)+\partial_\alpha l(Q_t+ \alpha^{\star}_t) +{p_t}(\widehat{Q}_t+ \widehat \alpha^{\star}_t)+J_t^\theta {f}(\widehat{Q}_t+ \widehat \alpha^{\star}_t-\bar{\alpha}) \right) \beta_t \right.\right.\nonumber\\
    &   \quad \quad + \left.\left. \left(\pi Q_t^{st} + (1-\pi)(Q_t + \alpha_t^{\star})\right)\partial_\alpha p(\pi Q_t^{st} + (1-\pi)(\widehat Q_t + \widehat \alpha^{\star}_t)) \widehat \beta_t   \right.\right. \nonumber \\ 
    & \quad \quad +\left.\left.  J_t^{\theta}({\tilde Q_t }+ \alpha_t^{\star}-\bar{\alpha})\partial_\alpha f({ \widehat {\tilde Q_t}}+ \widehat \alpha^{\star}_t-\bar{\alpha}) \widehat \beta_t + \partial_x g(\alpha^{\star}_t, S^{\alpha^{\star}}_t, Q_t)\bar{S}_t^\beta \right) dt +(1-\pi)\bar{S}_T^\beta \partial_x h({S}_T^{\alpha^{\star}})\right],
\end{align}
with $\bar{S}_0^\beta=0$ and $\bar{S}^\beta_{t}=\int_0^t \beta_u du$.\\
By applying It\^o's formula, we get
\begin{align}\label{eq 4}
    \mathbb{E}\left[\bar{S}_T^\beta \partial_s h({S}_T^{\alpha^{\star}})\right]& =\mathbb{E}\left[\bar{S}_T^\beta {Y}_T^{\star} \right]  =\mathbb{E}\left[\int_0^T {Y}_t^{\star} d \bar{S}_t^\beta+\int_0^T \bar{S}_t^\beta d {Y}_t^{\star}\right] \nonumber\\
    & =\mathbb{E}\left[\int_0^T {Y}_t^{\star} \beta_t dt- \int_0^T \partial_x g(\alpha^{\star}_t, S^{\alpha^{\star}}_t, Q_t) \bar{S}_t^\beta dt \right].
\end{align}
From \eqref{eq3} and $\eqref{eq 4}$, we deduce
\begin{align}
\mathbb{E}\left[\int_0^T \left(\partial_\alpha g(\alpha^{\star}_t, S^{\alpha^{\star}}_t, Q_t)+{p}_t(\widehat{Q}_t+ \widehat \alpha^{\star}_t)+Y_t^{\star} 
+\left(\pi Q_t^{st} + (1-\pi)\left(\widehat Q_t + \widehat \alpha^{\star}_t\right)\right)\partial_\alpha p(\pi Q_t^{st} + (1-\pi)(\widehat Q_t + \widehat \alpha^{\star}_t)) \right.\right. \nonumber \\
\left.\left. +{J_t^\theta f(\widehat{\tilde Q_t} + \widehat \alpha_t^* - \bar \alpha)} +\partial_\alpha l(Q_t+ \alpha^{\star}_t)+J_t^{\theta}(\widehat{\tilde Q_t} +  {\widehat{\alpha^\star_t}} - \bar \alpha) \partial_\alpha f(\widehat{\tilde Q_t} +  {\widehat{\alpha^\star_t}} - \bar \alpha) \right)\beta_t dt \right]=0.
\end{align}
By arbitrariness of $\beta$, we get the  coupling condition $\eqref{coupling2}$, i.e.
\begin{align}
\partial_\alpha g(\alpha^{\star}_t, S^{\alpha^{\star}}_t, Q_t)+\partial_\alpha l(Q_t+ \alpha^{\star}_t)+{p}_t(\widehat{Q}_t+ \widehat \alpha^{\star}_t)+\left(\pi Q_t^{st} + (1-\pi)\left(\widehat Q_t + \widehat \alpha^{\star}_t\right)\right)\partial_\alpha p(\pi Q_t^{st} + (1-\pi)(\widehat Q_t + \widehat \alpha^{\star}_t))  \nonumber\\
 +Y_t^{\star}+{J_t^\theta f(\widehat{\tilde Q_t} + \widehat \alpha_t^* - \bar \alpha)} +J_t^{\theta}(\widehat{\tilde Q_t} +  \alpha^\star_t - \bar \alpha) \partial_\alpha f(\widehat{\tilde Q_t} +  \alpha^\star_t - \bar \alpha) =0.
\end{align}
Conversely, similar to the MFG case, one can show that if the coupling condition is satisfied, we get that the Gateaux derivative of $J$ with respect to $\alpha$ is $0$ and we conclude by strict convexity of $J$.
\end{proof}
\paragraph{Equivalence between MFC and MFG problems.}
The two characterization systems are equivalent except the two coupling conditions. 
More precisely, let $g$, $l$, $h$ as in Assumption \ref{assump_2.1} and let ($p_{MFG}$, $f_{MFG}$) be two continuous functions with linear growth and ($p_{MFC}$, $f_{MFC}$) two $C_b^{1}$ functions satisfying the relations 
\begin{align}
\label{relation_MFG_MFC1}
  p_{MFC}(x) = \frac{\int_0^x p_{MFG}(y)dy}{x};\,\,\, f_{MFC}(x) = \frac{\int_0^x f_{MFG}(y)dy}{x}, \,\,\, x \neq 0.
\end{align}
From the two characterization results (for the MFG and MFC problems), we deduce that $\alpha^{\star}$ is a mean-field optimal control for the problem with pricing rules $p_{MFC}$ and $f_{MFC}$ if and only if it is a mean-field Nash equilibrium for the MFG problem with pricing rules $p_{MFG}$ and $f_{MFG}$. Under these assumptions, the uniqueness of the mean-field optimal control implies the uniqueness of the mean-field Nash equilibrium. In the particular case of the linear quadratic setting studied in the next section, the pricing rules corresponding to the MFG problem are given by $p_{MFG}(x)=p_0+p_1x$ and $f_{MFG}(x)=f_0+f_1x$ and there exist $C_b^1$ pricing rules for the associated mean-field control problem given by $p_{MFC}(x)=p_0+\frac{p_1}{2}x$ and $f_{MFC}(x)=f_0+\frac{f_1}{2}x$, which satisfy \eqref{relation_MFG_MFC1}. Therefore, we have uniqueness of the mean-field Nash equilibrium in this case.

Assume that $\alpha^{\star}_{MFC}$ is a mean field optimal control for the problem with pricing rules $p_{MFC}$ and $f_{MFC}$. Then, from \eqref{relation_MFG_MFC1}, we deduce that $\alpha^{\star}_{MFC}$ is a mean-field Nash equilibrium for the MFG problem with the pricing rules:
\begin{align}
\label{relation_MFG_MFC}
 & p_{MFG}(x) = p_{MFC}(x) + xp_{MFC}'(x),\\
 & f_{MFG}(x) = f_{MFC}(x) + xf_{MFC}'(x).
\end{align}
If the price function $p_{MFC}$ is increasing  and  $x>0$,  then the price used by the MFG system needs to be higher than the price used by the MFC system to produce the same result in terms of efforts $\alpha$. These two hypotheses seem quite natural as we defined $p_{MFG}$ as an inverse demand function and $x$ represents the global consumption of the Agents of the system. Indeed even if individually the power demand of an agent happens to be negative, we expect their total power demand to remain positive. We can interpret the relationship between the prices $p_{MFG}$  and $p_{MFC}$  as the additional penalisation which needs to be provided to the agents when they selfishly optimize (i.e. MFG system) to behave as if they were considering the whole community of all agents (i.e. MFC case). This is similar for the function $f_{MFG}$ which a convex growing function. It then penalizes more agents whose deviation is too low (or large) with respect to the target $\bar \alpha$ when the global deviation is also too low (or large) with respect to the target. On the contrary, it encourages agents whose deviation is on the opposite direction of the global deviation with respect to the target $\bar \alpha$ to further increase their effort in this direction as they are helping the system.

\begin{Remark}\normalfont
The relationships above between MFG and MFC are used in the numerical part in order to compute the solutions of the MFC as they allow us to use the same code for computing both equilibria at the same time.
\end{Remark}

\section{The linear-quadratic case and explicit solution of the MFG problem}
In this section, we study the linear-quadratic case (LQ case). In more detail, we provide an semi-explicit characterization of the MFG Nash equilibrium expressed through a decoupled system of forward-backward stochastic differential with jumps, involving a suitable Riccati BSDE and show that such an equilibrium provides approximate Nash equilibria in the $n$-player game for $n$ sufficiently large.   
\subsection{Main assumptions and a preliminary existence result}
We start by making the following assumption on the game coefficients.

\begin{Assumption}\label{ass-LQ}
\begin{enumerate}
\item $\mu(t,q)=\mu^{st}(t,q) = \mu q$ with $\mu \in \mathbb R$, and $\sigma(t,q)=\sigma q , \sigma^{st}(t,q)=\sigma^{st} q$, $\beta=0$ and $\beta^{st}=0$, with $\mu,\sigma,\sigma^0$ given constants.
\item $g(a,s,q)= \frac{A}{2} a^2 + \frac{C}{2} s^2 $ with $A, C \in \mathbb R_+ ^{\star}$.
\item \textcolor{black}{$l(x)= \frac{K}{2} x^2$ with $K \in \mathbb R_+$}.
\item $f(a) = f_0 + f_1 a$ with $f_i \in \mathbb R$, $i=0,1$ and $f_1 \geq 0$.
\item $p(q) = p_0 + p_1 q$ with $p_0 \in \mathbb R$, and $p_1 \in \mathbb R_+ ^{\star}$.
\item $h(s) = h_0 + h_1 s + \frac{h_2}{2} s^2$ with $h_i \in \mathbb R$, $i=0,1,2$ and $h_2 \geq 0$.
\end{enumerate}
\end{Assumption}

Before solving the MFG, we prove the following auxiliary existence and uniqueness result, that we will use in the next sub-sections.
\begin{Theorem}[Existence and uniqueness result for Riccati BSDEs with jumps]\label{Riccati}
Let $p \in \mathbb{R}_+$ and $\xi$ a bounded $\mathcal{F}_T^0$-measurable random variable  such that $\xi \geq 0$ a.s. Let $(\Theta_t)$ be an $\mathbb{F}^0$-adapted bounded optional stochastic process such that $\Theta_t \leq 0$ a.s. for all $t \in [0,T]$. Then there exists an unique solution $(Y,Z,U)$, with $Y$ bounded and $\mathbb{F}^0$-predictable processes $(Z,U) \in (\mathcal{H}^2)^2$  of the following Riccati BSDE with jumps:
\begin{align}\label{Ric}
-dY_t=(p+\Theta_tY_t^2)dt-Z_tdW^0_t-U_td\Tilde{N}^0_t; \,\, Y_T=\xi.
\end{align}
\end{Theorem}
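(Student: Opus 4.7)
The plan is to obtain existence via a truncation/linearization argument that reduces \eqref{Ric} to a Lipschitz BSDE with jumps, for which classical theory (Tang–Li, Barles–Buckdahn–Pardoux) yields a unique solution. The crux is to produce a uniform sup-norm bound on $Y$ so that the truncation is inactive and we recover a genuine solution of the quadratic equation.

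Concretely, set $M := \|\xi\|_\infty + pT$ and let $\phi_M(y) := (y \wedge M) \vee 0$, so that $\phi_M$ is Lipschitz with $\phi_M(y)^2$ Lipschitz of constant $2M$. I first consider the truncated BSDE
\begin{equation*}
-dY_t = \bigl(p + \Theta_t \phi_M(Y_t)^2\bigr)dt - Z_t\,dW^0_t - U_t\,d\widetilde N^0_t,\qquad Y_T=\xi.
\end{equation*}
Because $\Theta$ is bounded and $\phi_M^2$ is bounded Lipschitz, the generator is uniformly Lipschitz in $y$ (and does not depend on $z,u$), so by standard results there is a unique $(Y,Z,U)\in \mathcal{S}^2 \times \mathcal{H}^2 \times \mathcal{H}^2$ in the $\mathbb{F}^0$-filtration, with $Y$ bounded because the driver and terminal are bounded.

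Next I use the comparison theorem for Lipschitz BSDEs with jumps (which applies directly since the generator is independent of $U$, so the Royer/monotonicity assumption is trivial) to sandwich $Y$. Comparing with the constant-zero solution (driver $0$, terminal $0$): since $\xi\ge 0$ and $0 \le p + \Theta_t\phi_M(0)^2 = p$, we get $Y_t \ge 0$ a.s. Comparing with $Y^u_t := \|\xi\|_\infty + p(T-t)$ (driver $p$, terminal $\|\xi\|_\infty$): since $\xi \le \|\xi\|_\infty$ and $p + \Theta_t\phi_M(Y^u_t)^2 \le p$ (using $\Theta_t \le 0$), we get $Y_t \le Y^u_t \le M$. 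Hence $0 \le Y_t \le M$, so $\phi_M(Y_t) = Y_t$ and $(Y,Z,U)$ solves the original Riccati equation \eqref{Ric}, giving existence with $Y$ bounded.

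For uniqueness, if $(Y^1,Z^1,U^1)$ and $(Y^2,Z^2,U^2)$ are two solutions with $Y^i$ bounded, then $\bar Y := Y^1-Y^2$, $\bar Z := Z^1-Z^2$, $\bar U := U^1-U^2$ solve the \emph{linear} BSDE
\begin{equation*}
-d\bar Y_t = \Theta_t(Y^1_t+Y^2_t)\,\bar Y_t\,dt - \bar Z_t\,dW^0_t - \bar U_t\,d\widetilde N^0_t,\qquad \bar Y_T = 0,
\end{equation*}
whose coefficient $\Theta_t(Y^1_t+Y^2_t)$ is bounded, so multiplication by the exponential weight $\Gamma_t := \exp(\int_0^t \Theta_s(Y^1_s+Y^2_s)\,ds)$ turns $\Gamma\bar Y$ into a true martingale with zero terminal value, yielding $\bar Y \equiv 0$ and then $\bar Z = \bar U = 0$. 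The main obstacle is the quadratic nonlinearity, which is unbounded a priori; the sign condition $\Theta \le 0$ (combined with $\xi,p \ge 0$) is essential because it provides the one-sided comparison that makes the a priori bound $Y \in [0,M]$ available and thereby legitimizes the truncation.
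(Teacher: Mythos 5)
Your proof is correct, but the existence argument takes a genuinely different route from the paper's. The paper constructs the solution by a monotone Newton-type iteration: starting from $X^{(0)}\equiv 0$, it solves the linearized BSDEs with generator $p+2\Theta_t X^{(n)}_t X^{(n+1)}_t-\Theta_t (X^{(n)}_t)^2$, uses the explicit representation of linear BSDEs with jumps to check that each iterate is nonnegative and bounded, shows the sequence $(X^{(n)})$ is decreasing, and passes to the limit. You instead truncate the quadratic nonlinearity so that the generator becomes globally Lipschitz, solve once, and then remove the truncation by sandwiching $Y$ between $0$ and $M=\|\xi\|_\infty+pT$ via the comparison theorem for BSDEs with jumps; your observation that the comparison theorem applies without fuss because the generator does not depend on the jump component (so Royer's monotonicity condition is vacuous) is the right justification. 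Both arguments exploit exactly the same sign structure ($\Theta\le 0$, $\xi\ge 0$, $p\ge 0$) and in fact yield the same a priori bound, since the paper's limiting process is dominated by $X^{(1)}_t\le \|\xi\|_\infty+pT$. What each approach buys: yours is shorter and uses the comparison theorem as a black box, while the paper's is more self-contained (it needs only the representation of linear BSDEs with jumps, no comparison result) and has the side benefit of producing a convergent approximation scheme for the solution. The uniqueness step is essentially identical in both proofs: the difference of two bounded solutions satisfies a linear BSDE with bounded coefficient $\Theta_t(Y^1_t+Y^2_t)$ and zero terminal condition, hence vanishes; your explicit exponential-weight martingale argument is just an unpacking of the uniqueness statement for linear BSDEs with jumps that the paper invokes.
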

\begin{proof}
\textit{Existence.} Consider the process $X^{(0)} \equiv 0$ and define for each  $n\geq 0$ the following BSDE:
\begin{align*}
    \begin{cases}
    -dX_t^{(n+1)}=\left(p+2\Theta_tX_t^{(n)}X_t^{(n+1)}-\Theta_t(X_t^{(n)})^2\right)dt-Z_t^{(n+1)}dW^0_t-U_t^{(n+1)}d\Tilde{N}^0_t; \nonumber\\
    X_T^{(n+1)}=\xi.
    \end{cases}
\end{align*}
The above BSDE is a linear BSDE with jumps, and the first component of the unique solution $(X_\cdot^{(n+1)},Z_\cdot^{(n+1)},U_\cdot^{(n+1)})$, admits the following representation:
\begin{align*}
    X_t^{(n+1)}=\mathbb{E}\left[e^{\int_t^T 2\Theta_sX_s^{(n)}ds}\xi+\int_t^Te^{\int_t^u 2\Theta_sX_s^{(n)}ds}(p-\Theta_u(X_u^{(n)})^2)du \bigg| \mathcal{F}^0_t\right].
\end{align*}
Due to the assumptions, one can easily remark that, for all $n$, $0 \leq X_t^{(n)} \leq C_n$ a.s. for all $t \in [0,T]$, with $C_n$ positive constant.

Define now the sequence: $\bar{X}^{(n)} \equiv X^{(n)}-X^{(n+1)}$, $\bar{Z}^{(n)} \equiv Z^{(n)}-Z^{(n+1)}$ and $\bar{U}^{(n)} \equiv U^{(n)}-U^{(n+1)}$, $n \geq 1$. We obtain
\begin{align*}
\begin{cases}
    -d\bar{X}_t^{(n)}=\left(2\Theta_tX_t^{(n)}\bar{X}_t^{(n)}-\Theta_t(\bar{X}_t^{(n-1)})^2\right)dt-\bar{Z}_t^{(n)}dW^0_t-\bar{U}_t^{(n)}d\Tilde{N}^0_t;\\\nonumber
    \bar{X}_T^{(n)}=0.
\end{cases}
\end{align*}

Observe that the above BSDE is linear, therefore, for $n \geq 1$, the following representation holds:
\begin{align*}
    \bar{X}_t^{(n)}=\mathbb{E}\left[\int_t^Te^{\int_t^u 2\Theta_sX_s^{(n)}ds}\left(-\Theta_u\bar{X}_u^{(n-1)})^2\right)du \bigg| \mathcal{F}^0_t\right],
\end{align*}
which leads to $\bar{X}^{(n)} \geq 0$. Consequently, the sequence $X^{(n)}$ is decreasing; since, moreover, it is bounded from below, we derive that $X^{(n)}_\cdot$ converges a.s. for all $t\in [0,T]$ to a limiting bounded process $X$, such that there exists $\mathbb{F}^0$- predictable processes $(Z,U) \in (\mathcal{H}^2)^2$  such that $(X,Z,U)$ is a solution of the Ricatti BSDE \eqref{Ric}.

\textit{Uniqueness.} Let $(Y^1,Z^1,U^1)$ and $(Y^2,Z^2,U^2)$ be two solutions of the Riccati BSDE \eqref{Ric} satisfying the integrability conditions given in the statement of the theorem. The process $(\bar{Y}, \bar{Z},\bar{U})$, with  $\bar{Y}=Y^1-Y^2$, $\bar{Z}=Z^1-Z^2$,$\bar{U}=U^1-U^2$, satisfies the linear BSDE:
\begin{align*}
    -d\bar{Y}_t=\Theta_t\bar{Y_t}(Y_t^1+Y_t^2)dt-\bar{Z}_tdW^0_t-\bar{U}_td\Tilde{N}^0_t;\,\, \bar{Y}_T=0,
\end{align*}
Observe that $(0,0,0)$ is a solution of the above linear BSDE with jumps. By uniqueness of the solution of a linear BSDE with jumps, we derive that  $\bar{Y}\equiv \bar{Z} \equiv \bar{U} \equiv 0$.
\end{proof}

\subsection{Characterization of the solution}  Now, we proceed with solving the MFG in the linear quadratic case building on the stochastic maximum principle approach developed in the previous section. Using \eqref{adj 1}, the adjoint variable is given in this case by
\[ dY_t = -CS_t ^\alpha dt + q_t ^0 dW_t ^0 + q_t dW_t + q^N _t d\widetilde N_t+q^{0,N} _t d\widetilde N^0_t, \quad Y_T = \partial_x h(S^\alpha _T)=h_1+h_2 S^\alpha _T,\]
where $dS^\alpha _t = \alpha_t dt$, $S^\alpha _0 =s_0$, and the coupling condition
\be A\alpha_t +\textcolor{black}{K(Q_t+\alpha_t)}+ p_t(\widehat Q_t + \widehat \alpha_t) +Y_t +f(\widehat{\tilde{Q}}_t+\widehat \alpha_t - \bar \alpha)J_t ^\theta =0, \label{coupling}\ee
where
\[ J_t ^\theta = \textbf{1}_{R_t \leq \theta}, \quad t\in [0,T].
\]
 
Notice that $J^\theta$ above is independent of the control $\alpha$.
Since $W,W^0,N^0$ are independent, hypothesis ($\mathcal H$) in \cite[Section 2.4]{bremaud1978changes} is fulfilled, hence using Proposition 7(ii) in \cite{bremaud1978changes} and exploiting the fact that $J^\theta$ is already $\mathbb F ^{0}$-adapted, we can project the adjoint variables $(\widehat Y, \widehat q^0, \widehat q^{0,N})$ into the filtration $\mathbb F^0$ leading to the following BSDE with jumps:
\be d\widehat Y_t = -CS_t ^{\widehat \alpha} dt + \widehat q_t ^0 dW_t ^0 + \widehat q^{0,N} _t d\widetilde N^0_t, \quad \widehat Y_T = \partial_x h(S^{\widehat \alpha} _T), \label{adjoint}
\ee
while the coupling condition becomes
\be A\widehat \alpha_t +\textcolor{black}{K(\widehat{Q}_t+\widehat{\alpha}_t)}+ p_t(\widehat Q_t + \widehat \alpha_t) +\widehat Y_t +f(\widehat{\tilde{Q}}_t+\widehat \alpha_t - \bar \alpha)J_t ^\theta =0, \label{proj-coupling}\ee
which in this LQ case reads (also recall \eqref{price_fct}) as
\[ A\widehat \alpha_t+\textcolor{black}{K(\widehat Q_t + \widehat \alpha_t)} + p_0 + p_1(\pi Q_t^{st} + (1-\pi)(\widehat Q_t +\widehat \alpha_t)) +\widehat Y_t +(f_0 +f_1(\widehat{\tilde{Q}}_t+\widehat \alpha_t - \bar \alpha))J_t ^\theta =0.\]
We look for a solution taking the form:
\[\widehat{Y}_t=\bar{\phi_t}S_{t}^{\widehat{\alpha}}+\bar{\psi}_t,\]
with $\bar \phi=(\bar{\phi}_t)_{t\in [0,T]}$ and $\bar \psi = (\bar{\psi}_t)_{t\in [0,T]}$  obtained by solving the following two BSDEs with jumps:
\be d\bar{\phi}_t=\left(-C+\frac{1}{A+\textcolor{black}{K}+(1-\pi)p_1+f_1J_t^{\theta}}\bar{\phi}_t^2\right) dt+ \widehat \xi^0_t dW_t ^0 + \widehat \xi^{0,N}_t  d\widetilde N^0_t, \quad \bar{\phi}_T=h_2.\label{phihat} \ee
and
\begin{align}
\label{psihat}
\begin{cases}
d\bar \psi_t = \frac{\bar{\phi}_t}{A+(1-\pi)p_1+\textcolor{black}{K}+f_1J_t^{\theta}}\left[p_0+\pi p_1 Q_t^{st} + \textcolor{black}{((1-\pi)p_1+K)}\widehat Q_t+J_t^{\theta}(f_0+f_1(\widehat{Q_t}-\mathbb{E}[\widehat Q_t]-\bar{\alpha}))+\bar \psi_t \right]dt \\
\quad + \widehat{\eta}_t^0 dW_t ^0 +  \widehat \eta^{0,N}_t  d\widetilde N^0_t \\
\bar \psi_T = h_1,
\end{cases}
\end{align}
which are derived by using the ansatz and It\^o's formula. Note that the two above BSDEs admit an unique solution (see Theorem \ref{Riccati} for the Riccati BSDE with jumps and Theorem 2.4 in \cite{royer2016} for the linear BSDE with jumps).\\
One can check with It\^{o}'s formula, using the ansatz and the (projected) coupling condition that given  $(\bar \phi, \widehat \xi^0,\widehat \xi^{0,N})$   the unique solution of the Riccati BSDE (\ref{phihat}) and $(\bar \psi, \widehat{\eta}^0, \widehat \eta^{0,N})$ the unique solution of the BSDE (\ref{psihat}), then the triple ($\Tilde Y$, $\Tilde q ^0$, $\Tilde q^{0,N}$) defined by
\[\Tilde{Y}_t:=\bar{\phi_t}S_{t}^{\widehat{\alpha}}+\bar{\psi}_t, \quad \Tilde q_t ^0 := \widehat \xi^0_tS_{t}^{\widehat{\alpha}}+\widehat{\eta}_t^0, \quad \Tilde q_t ^{0,N} := \widehat \xi^{0,N}_t S_{t}^{\widehat{\alpha}}+\widehat{\eta}_t^{0,N}, \quad t \in [0,T],\]
is a solution of the adjoint equation (\ref{adjoint}). Since equation (\ref{adjoint}) admits an unique solution, we get $\Tilde{Y}=\widehat{Y}$, $\Tilde q^0=\widehat q^0$ and $\Tilde q^{0,N}=\widehat q^{0,N}$.
 Therefore, by using again the coupling condition and by substituting the ansatz in the projected coupling condition,  $ \widehat{\alpha}_t$ has the following representation in feedback form:
\be \widehat{\alpha}_t = - \frac{1}{K_t^{\theta}}\left(p_0+\pi p_1 Q_t^{st} +\textcolor{black}{((1-\pi)p_1+K)}\widehat Q_t + \bar{\phi_t}S_{t}^{\widehat{\alpha}}+\bar{\psi}_t + J_t^{\theta}\left\{f_0 +f_1(\widehat{Q}_t-\mathbb{E}(\widehat{Q}_t)- \bar \alpha)\right\} \right), \label{alphahat}\ee
where $K_t ^\theta := A+(1-\pi)p_1 +\textcolor{black}{K}+ f_1 J_t ^\theta$, which is strictly positive due Assumption \ref{ass-LQ}.6. Here the expression for $ \widehat{\alpha}_t$ is linear in $\widehat S_t := S_t^{\widehat \alpha}$, so $\widehat S$ satisfies a linear SDE that can be solved explicitly, yielding
\be 
\widehat S_t = s_0 e^{-\int_0^t\bar \phi(u)/K_u ^\theta du}+\int_0^t \widehat A_r  e^{-\int_r^t\bar \phi(s)/K_s^{\theta} ds} dr
\label{Shat} \ee where $\widehat A_r=-\frac{1}{K_r^{\theta}}\left(p_0+\pi p_1 Q_r^{st} +((1-\pi)p_1+\textcolor{black}{K)}\widehat Q_r +\bar{\psi}_r + J_r^{\theta}(f_0+f_1(\widehat{\tilde Q_r} -\bar{\alpha})) \right)$. With $\widehat S_t$, we can obtain $ \widehat{\alpha}_t$ by (\ref{alphahat}).

With $\widehat \alpha$, we go back to the (unprojected) coupling condition to find $\alpha^{\star}$. By assuming $Y_t=\phi_t S_{t}^{\alpha}+\psi_t$ and proceeding in the same way as for $\widehat Y_t$, we have
\be d\phi_t=\left(-C+\textcolor{black}{\frac{1}{A+K}}\phi_t^2\right) dt, \quad \phi_T=h_2, \label{phi}\ee
and
\be d \psi_t&=&
\textcolor{black}{\frac{\phi_t}{A+K}}\left[\textcolor{black}{KQ_t}+p_0+\pi p_1 Q_t^{st} +(1-\pi)p_1(\widehat Q_t+\widehat \alpha_t )+J_t^{\theta}(f_0+f_1(\widehat{\tilde Q}_t +\widehat \alpha_t -\bar{\alpha}))+ \psi_t\right]dt \nonumber\\
&&+ \eta_t ^0 dW_t ^0 + \eta_t dW_t+ \eta_t ^{0,N} d\widetilde N^0_t, \label{psi}\ee
with  $\psi_T=h_1$. Finally, we can provide an expression for $\alpha^{\star}$ as follows:
\begin{align}
\alpha^{\star} _t & = \textcolor{black}{\frac{1}{A+K}}\left(\textcolor{black}{-KQ_t-}p_0  - \pi p_1 Q_t^{st} - p_1(1-\pi) (\widehat Q_t + \widehat \alpha_t) \right. \nonumber \\ 
& \quad \left. - \phi_t S^{\alpha^{\star}} _t -  \psi_t -\left\{f_0 +f_1\left(\widehat{\tilde Q}_t + \widehat \alpha_t- \bar \alpha\right)\right\}J_t ^\theta   \right),
\label{alpha} \end{align}
which is linear in $S^{\star} _t = S_t ^{\alpha^{\star}}$ as in the projected case, so the SDE for $S$ controlled by $\alpha^{\star}$ can be solved explicitly as
\be
S^{\star} _t = e^{-\int_0 ^t (\phi_u /\textcolor{black}{(A+K)}) du } \left( s_0 + \int_0 ^t A^{\star} _r e^{\int_0 ^r (\phi_u /\textcolor{black}{(A+K)}) du } dr \right),
\label{S} \ee
where
\[ A_r ^{\star} = \textcolor{black}{\frac{1}{A+K}} \left(\textcolor{black}{-KQ_r-}p_0 -\pi p_1 Q_r^{st} - (1-\pi)p_1 (\widehat Q_r + \widehat \alpha_r) -  \psi_r - \left(f_0 + f_1\left(\widehat{\tilde Q}_r + \widehat \alpha_r- \bar \alpha\right) J_r ^\theta \right) \right).\]
Finally, exploiting the integrability properties of $\phi$ and $\psi$ and the boundedness of $J^\theta$, one can easily prove that $\alpha^\star$ belongs to $\mathcal A$.

\subsection{Approximate Nash equilibria for the n-player model}

We describe here how the equilibrium strategy $\alpha^{\star}$ found above can be implemented in the $n$-player game in order to obtain an approximate Nash equilibrium $(\alpha^{\star,1}, \ldots, \alpha^{\star,n})$ with vanishing error as $n \to \infty$. Let $i=1,\ldots,n$ and let
\begin{align} \label{eps-nash}
\alpha^{\star,i} _t & = \frac{1}{A+K}\left(-K Q^{i}_t -p_0 -\pi p_1 Q_t^{st}-p_1(1-\pi)(\widehat Q_t + \widehat \alpha_t) \right. \nonumber \\
& \quad \left. - \phi_t S^{\star,i} _t -  \psi^{i}_t -(f_0 +f_1 (\widehat Q_t - \mathbb E[\widehat Q_t] - \bar \alpha)J_t ^\theta   \right),
\end{align}
with $\widehat \alpha_t$ and $J_t ^\theta$ as above, while $\phi$ and $(\psi^i ,\eta^{0,i}, \eta^{N,i})$, $i=1,\ldots,n$, are solutions to the following BSDE
\be d\phi_t=\left(-C+\frac{1}{A+K} \phi_t ^2\right) dt
, \quad \phi_T=h_2, \ee
and
\begin{align} d \psi^i _t &=  \frac{\phi_t}{A+K}\left[K Q^i _t + p_0+\pi p_1 Q_t^{st} +(1-\pi)p_1(\widehat Q_t+\widehat \alpha_t )+J_t^{\theta}(f_0+f_1(\widehat{Q}_t +\widehat \alpha_t -\mathbb{E}[\widehat Q_t]-\bar{\alpha}))+ \psi^i _t\right]dt \nonumber\\
& \quad + \eta_t ^{0,i} dW_t ^0 + \eta_t ^{i} dW_t ^{i} + \eta_t ^{N,i} d\widetilde N^0_t  ,  \end{align}
with  $\psi^i _T=h_1$, where $\widehat \alpha$ is as in \eqref{alphahat}.
Theorem \ref{Riccati} and Theorem 2.4 in \cite{royer2016} grant existence and uniqueness of the solution for system above. A useful consequence of the definition of $\alpha^{\star,i}$ is that the two vector-valued processes
\be
(\alpha^\star, S^\star, Q, \widehat Q, Q^{st}) \quad \textrm{and} \quad (\alpha^{\star,i}, S^{\star,i}, Q^i , \widehat Q, Q^{st}) \label{equa-distr} 
\ee
have the same distribution, for all $i=1,\ldots,n$.
The description of $i$-th player strategy $\alpha^{\star,i}$ is completed by the expression for $S^{\star,i}$ given by 
\be
S^{\star,i} _t = s_0 + \int_0 ^t \alpha^{\star,i}_u du = e^{-\int_0 ^t (\phi _u /(A+K)) du } \left( s_0 + \int_0 ^t A^{i} _r e^{\int_0 ^r (\phi _u /(A+K)) du } dr \right), \label{expl-S}
\ee
where
\[ A_r ^{i} = \frac{1}{A+K} \left(-K Q^{i}_t -p_0 - \pi p_1 Q_r^{st} - (1-\pi)p_1 (\widehat Q_r + \widehat \alpha_r) -  \psi^i _r - (f_0 + f_1 (\widehat \alpha_r - \bar \alpha)) J_r ^\theta \right).\]
Finally, we recall that
\[ d\widehat Q_t = \mu \widehat Q_t dt + \sigma^0 \widehat Q_t dW_t ^0 ,\]
and that $\mathbb E[Q_t]=\mathbb E[\widehat Q_t] = Q_0 e^{\mu t}$ for all $t \in [0,T]$.

\begin{Proposition}
The strategy profile $(\alpha^{\star,1}, \ldots, \alpha^{\star,n})$ defined in \eqref{eps-nash} is an $\varepsilon_n$-Nash equilibrium with $\varepsilon_n \to 0$ as $n \to \infty$. 
\end{Proposition}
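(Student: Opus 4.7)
The plan is to show that any unilateral deviation in the $n$-player game changes the cost by at most $O(1/\sqrt n)$ relative to the representative MFG cost, so that the MFG optimality of $\alpha^\star$ transfers directly. By symmetry fix player $i=1$ and a deviation $\beta \in \mathcal{A}$, and set $\tilde \alpha := (\beta, \alpha^{\star,2}, \dots, \alpha^{\star,n})$. First I would restrict to $\beta$ such that $J^1_n(\tilde \alpha) \leq J^1_n(\alpha^\star)$, since otherwise the $\varepsilon$-Nash inequality is trivial. The equi-distribution \eqref{equa-distr} combined with an $O(1/\sqrt n)$ estimate (proved below) shows that $J^1_n(\alpha^\star)$ is uniformly bounded in $n$. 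Exploiting the strict convexity and quadratic structure of $g, l, h$ (Assumption \ref{ass-LQ}), and noting that the $\beta^2$ coefficient coming from the interaction terms $c^1_t + d^1_t$ is nonnegative ($p_1, f_1 \geq 0$), a standard coercivity argument then yields a uniform-in-$n$ a priori bound $\mathbb E[\int_0^T \beta_t^2 dt] \leq M$.

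The core technical ingredient is a conditional law of large numbers with common noise. Conditionally on $\mathbb F^0$, the triples $(Q^j, \alpha^{\star,j}, S^{\star,j})_{j=1,\ldots,n}$ are i.i.d.: the idiosyncratic data $(q_0^j, s_0^j, W^j, N^j)$ are i.i.d., while every other ingredient of the formula \eqref{eps-nash}, namely $\widehat Q, \widehat \alpha, \bar \phi, J^\theta, Q^{st}$, is $\mathbb F^0$-measurable (the individual $\psi^i$ only contributes through $Q^i$ and the common drivers). Combined with \eqref{equa-distr}, the $\mathbb F^0$-conditional mean of $\alpha^{\star,j}_t$ is $\widehat \alpha_t$ and that of $Q^j_t$ is $\widehat Q_t$. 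Applying the $L^2$-LLN conditionally then yields, for some $C$ independent of $n$,
\begin{align*}
\mathbb E\int_0^T \Big| \tfrac{1}{n}\textstyle\sum_{j=1}^n \alpha^{\star,j}_t - \widehat \alpha_t \Big|^2 dt + \mathbb E\int_0^T \Big| \tfrac{1}{n}\textstyle\sum_{j=1}^n Q^j_t - \widehat Q_t \Big|^2 dt \leq \frac{C}{n},
\end{align*}
and analogously for $\tilde Q^j$.

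Next I would compare $J^1_n(\tilde \alpha)$ with $J^{MFG}(\beta; \widehat \alpha)$. The pieces $g(\beta, S^\beta, Q^1) + l(Q^1 + \beta) + h(S_T^\beta)$ match in the two functionals. For the interaction terms $c^1_t$ and $d^1_t$, I would decompose
\begin{align*}
\tfrac{1}{n}\big(\beta_t + \textstyle\sum_{j\neq 1} \alpha^{\star,j}_t\big) - \widehat \alpha_t = \tfrac{1}{n}(\beta_t - \alpha^{\star,1}_t) + \big(\tfrac{1}{n}\textstyle\sum_{j=1}^n \alpha^{\star,j}_t - \widehat \alpha_t\big),
\end{align*}
where the first summand is $O(1/n)$ in $L^2$ by the bound on $\beta$, and the second is $O(1/\sqrt n)$ by the LLN step; the aggregate for $Q^j$ is handled similarly. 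Since $p$ and $f$ are affine in the LQ case, Cauchy--Schwarz against the $L^2$-bound on $Q^1 + \beta$ gives $|J^1_n(\tilde \alpha) - J^{MFG}(\beta; \widehat \alpha)| \leq C/\sqrt n$, and the analogous bound holds for $\beta = \alpha^{\star,1}$ (with $L^2$-norm controlled by construction). Combining with the equi-distribution identity $J^{MFG}(\alpha^{\star,1}; \widehat \alpha) = J^{MFG}(\alpha^\star; \widehat \alpha)$ and the mean-field optimality $J^{MFG}(\alpha^\star; \widehat \alpha) \leq J^{MFG}(\beta; \widehat \alpha)$ (Theorem 3.1), the chain
\begin{align*}
J^1_n(\tilde \alpha) \geq J^{MFG}(\beta; \widehat \alpha) - C/\sqrt n \geq J^{MFG}(\alpha^\star; \widehat \alpha) - C/\sqrt n \geq J^1_n(\alpha^\star) - 2C/\sqrt n
\end{align*}
delivers $\varepsilon_n = 2C/\sqrt n$. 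The step I anticipate as hardest is the uniform a priori $L^2$-bound on an arbitrary deviation $\beta$: it is where the coercivity of the quadratic pieces of $g, l, h$ must be shown to dominate the interaction $c^1 + d^1$ in the $n$-player cost uniformly in $n$; once this is secured, the conditional LLN and the linearity of $p, f$ make the rest a routine estimation.
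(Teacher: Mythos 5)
Your proposal is correct and follows the same overall architecture as the paper's proof (compare the $n$-player cost to the mean-field cost, control the gap between empirical averages and their $\mathbb F^0$-projections, use the affine structure of $p$ and $f$ plus Cauchy--Schwarz, and conclude from the MFG optimality of $\alpha^\star$), but it differs in two genuine and worthwhile respects. First, where the paper establishes the convergence $\sup_t \| \widehat Q_t - \frac1n\sum_j Q^j_t\|_{L^2} \to 0$ by invoking conditional propagation of chaos for the $Q^j$ and then propagates the estimate through the linear representations of $\psi^j$, $S^{\star,j}$ and the feedback form of $\alpha^{\star,j}$, you apply a conditional law of large numbers directly to the triples $(Q^j,\alpha^{\star,j},S^{\star,j})$, which are i.i.d.\ conditionally on $\mathbb F^0$ with conditional means $(\widehat Q,\widehat\alpha,\widehat S)$; the elementary conditional-variance computation then gives the explicit rate $\varepsilon_n = O(1/\sqrt n)$, which the paper never quantifies. (The only point you gloss over is that $\mathbb E[\alpha^{\star,j}_t\,|\,\mathcal F^0_T]=\mathbb E[\alpha^{\star,j}_t\,|\,\mathcal F^0_t]=\widehat\alpha_t$, which follows from independence of the post-$t$ increments of the common noise from $\mathcal F_t$; this should be stated.) Second, and more importantly, you treat the deviation step in full: the paper proves only its item 1 in detail and disposes of $\limsup_n\sup_{\alpha^i}J^i_n(\alpha^i,\alpha^{\star,-i})\ge J^{MFG}(\alpha^\star)$ with the phrase ``using similar arguments,'' whereas this step is not symmetric to item 1 --- it requires precisely the uniform-in-$n$ a priori $L^2$ bound on the deviation $\beta$ that you identify as the crux. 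Your coercivity argument for that bound is sound: restricting to $\beta$ with $J^1_n(\tilde\alpha)\le J^1_n(\alpha^\star)$, the quadratic terms $\frac{A+K}{2}\beta^2$ dominate, the interaction contributes a nonnegative $\frac{(1-\pi)p_1 + f_1 J^\theta}{n}\beta^2$ plus cross terms controlled by Cauchy--Schwarz against uniformly bounded empirical averages, so $\mathbb E\int_0^T\beta_t^2\,dt\le M$ uniformly in $n$. In short, your route buys an explicit convergence rate and a complete treatment of the part the paper leaves implicit, at the cost of the slightly heavier bookkeeping around the deviating player.
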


\begin{proof} The proof consists in showing the following two limits for $i=1,\ldots,n$:
\begin{enumerate}
    \item $\lim_{n \to \infty} J_n ^i (\alpha^{\star,1},\ldots,\alpha^{\star,n}) = J^{MFG}(\alpha^{\star})$;
    \item $\limsup_{n \to \infty} \sup_{\alpha^i \in \mathcal A} J_n ^i (\alpha^i , \alpha^{\star,-i}) \geq J^{MFG}(\alpha^{\star})$.
\end{enumerate}
Combining the two limits above we would get that for all $\varepsilon>0$ there exists $n_\varepsilon \in \mathbb N$ such that
\[ J_n ^i (\alpha^{\star,1},\ldots, \alpha^{\star,n}) \ge \sup_{\alpha^i} J_n ^i (\alpha^i , \alpha^{\star,-i}) - \varepsilon,\]
for all $i = 1,\ldots,n$, i.e. $(\alpha^{\star,1}, \ldots, \alpha^{\star,n})$ is an $\varepsilon$-Nash equilibrium, and for all $n \ge n_\varepsilon$.
By symmetry, it clearly suffices to prove properties 1 and 2 above only for the first player. Recall the following two expressions for the MFG objective functional, where the second one comes from \eqref{equa-distr} (with $i=1$) together with projecting onto $\mathcal F_t ^0$,

\begin{align*}
J^{MFG}(\alpha^{\star}) & = \mathbb E\left[\int_0 ^T \left(\frac{A}{2}(\alpha_t ^{\star})^2 + \frac{C}{2}(S_t ^{\star})^2 + \frac{K}{2}(Q_t + \alpha_t ^\star)^2 \right. \right.  \\
& \quad \quad  \left. \left. + (Q_t + \alpha_t ^{\star})(p_0 +p_1 (\pi Q_t^{st} + (1-\pi)(\widehat Q_t + \widehat{\alpha}_t ^{\star})))\right. \right. \\
& \quad \quad \left.\left. + J_t ^\theta (\tilde Q_t + \alpha_t ^{\star} - \bar \alpha)(f_0 + f_1 (\widehat{\tilde Q}_t + \widehat{\alpha}^{\star} _t - \bar \alpha))\right) dt  + h_0 + h_1 S_T ^{\star} + \frac{h_2}{2}(S_T ^{\star})^2 \right] \\
& = \mathbb E\left[\int_0 ^T \left(\frac{A}{2}(\alpha_t ^{\star,1})^2 + \frac{C}{2}(S_t ^{\star,1})^2 + \frac{K}{2}(Q^1 _t + \alpha_t ^{\star,1})^2 \right. \right. \\
& \quad \quad \left. \left. +  (Q^1 _t + \alpha_t ^{\star,1})(p_0 +p_1 (\pi Q_t^{st} + (1-\pi)({\widehat Q}^1_t + \widehat \alpha_t ^{\star,1})))\right. \right. \\
& \quad \quad \left.\left. + J_t ^\theta (\tilde{Q}^1 _t + \alpha_t ^{\star,1} - \bar \alpha)(f_0 + f_1 (\widehat{\tilde Q}^1_t  + \widehat{\alpha}^{\star,1} _t - \bar \alpha))\right) dt + h_0 + h_1 S_T ^{\star,1} + \frac{h_2}{2}(S_T ^{\star,1})^2 \right]
\end{align*}

and
\begin{align*}
J^1 _n (\alpha^1, \alpha^{\star,-1}) & = \mathbb E\left[\int_0 ^T \left(\frac{A}{2}(\alpha_t ^1 )^2 + \frac{C}{2}(S_t ^{\alpha^1})^2 + \frac{K}{2}(Q^1 _t + \alpha_t ^{1})^2 \right. \right. \\
& \quad \quad \left. \left. + (Q^1 _t + \alpha_t ^1)\left(p_0 +p_1 \left(\pi Q_t ^{st} + (1-\pi) \frac{1}{n}\sum_{j} \left(Q_t ^j + \alpha_t ^{\star,j}\right)\right)\right)\right. \right. \\
& \quad \quad \left.\left. + J_t ^\theta (\tilde Q^1 _t + \alpha_t ^1 - \bar \alpha)\left(f_0 + f_1 \left(\frac{1}{n}\sum_{j} (\widehat{\tilde Q_t }^j + \alpha^{\star,j} _t) - \bar \alpha \right)\right)\right) dt \right. \\
& \quad \quad \left. + h_0 + h_1 S_T ^{\alpha^1} + \frac{h_2}{2}(S_T ^{\alpha^1})^2 \right],
\end{align*}
where we recall that $\tilde{Q_t} = Q_t - \mathbb E[Q_t] = Q^1 _t - \mathbb E[Q^1 _t]$.\medskip

\noindent 1. Let us show the first limit. Using the expressions above and the fact that $\mathbb E[Q_t] = \mathbb E[Q^1 _t]$ for all $t\in [0,T]$, we obtain
\begin{align}
  J^{MFG}(\alpha^{\star}) -  J^1 _n (\alpha^{\star,1}, \alpha^{\star,-1}) & = \mathbb E\left[\int_0 ^T \left( p_1(1-\pi) (Q^1 _t + \alpha_t ^{\star,1}) \left( (\widehat Q^1 _t + \widehat \alpha_t ^{\star,1}) - \frac{1}{n}\sum_{j} \left(Q_t ^j + \alpha_t ^{\star,j}\right)\right) \right.\right. \nonumber \\
  & \hspace{1cm} \left.\left. + f_1 J_t ^\theta \left( \tilde Q^1 _t + \alpha^{\star,1} _t -\bar\alpha \right)\left( (\widehat{\tilde Q}^1 _t + \widehat{\alpha}^{\star,1} _t) - \frac{1}{n}\sum_{j} (Q_t ^j + \alpha^{\star,j} _t)\right)\right) dt \right] .
\end{align}
Now, since $J_t ^\theta$ is uniformly bounded, there exists a constant $C>0$ (changing possibly from line to line) such that
\begin{align}
  | J^{MFG}(\alpha^{\star}) -  J^1 _n (\alpha^{\star,1}, \alpha^{\star,-1}) | & \le C \sup_{t\in [0,T]} \left\|Q^1 _t + \alpha^{\star,1} _t \right \|_{L^2} \left \| \int_0 ^T \left((\widehat Q^1 _t + \widehat \alpha_t ^{\star,1}) - \frac{1}{n}\sum_{j} \left(Q_t ^j + \alpha_t ^{\star,j}\right)\right) dt \right \|_{L^2} \\
 & \le  C \sup_{t\in [0,T]} \left\|Q _t + \alpha^{\star} _t \right \|_{L^2} \sup_{t \in [0,T]} \left \| (\widehat Q^1 _t + \widehat \alpha_t ^{\star,1}) - \frac{1}{n}\sum_{j} \left(Q_t ^j + \alpha_t ^{\star,j}\right) \right \|_{L^2},
\end{align}
where we observe that $\sup_{t\in [0,T]} \left\|Q _t + \alpha^{\star} _t \right \|_{L^2}$ is finite. Hence, to show to conclude this part it suffices to show
\begin{equation}
    \sup_{t \in [0,T]} \left \| \widehat Q^1 _t - \frac{1}{n}\sum_{j} Q_t ^j \right \|_{L^2} + \sup_{t \in [0,T]} \left \| \widehat \alpha_t ^{\star,1} - \frac{1}{n}\sum_{j} \alpha_t ^{\star,j} \right \|_{L^2} \to 0, \quad n \to \infty.
\end{equation}
Now, the convergence to zero of the first summand on the LHS follows from conditional propagation of chaos applied to the sequence of processes $Q^i$.\footnote{See, for instance, Theorem 2.12 in Carmona and Delarue book \cite{CarmonaDelarueBook17}, Vol. II, which can be easily extended to our case with jumps exploiting in particular the fact that the intensities are constant.} It remains to show the convergence to zero of the second summand. Using the expressions for $\alpha^{\star,i}$, $i=1\ldots, n$, and since both $\phi$ and $J^\theta$ are bounded, we have
\begin{equation*}
\left \| \widehat \alpha_t ^{\star,1} - \frac{1}{n}\sum_{j} \alpha_t ^{\star,j} \right \|_{L^2} \le C^\prime \left(\left \| \widehat Q^1 _t - \frac{1}{n}\sum_{j} Q_t ^j \right \|_{L^2} \!\! + \left \| S^{\star,1} _t - \frac{1}{n}\sum_{j} S_t ^{\star,j} \right \|_{L^2} \!\! + \sup_{t \in [0,T]} \left \| \psi^1 _t - \frac{1}{n}\sum_{j} \psi_t ^j \right \|_{L^2}\right),   
\end{equation*}
for some further constant $C^\prime >0$, so we are left with showing
\[ \sup_{t \in [0,T]} \left \| \psi^1 _t - \frac{1}{n}\sum_{j} \psi_t ^j \right \|_{L^2}+\sup_{t \in [0,T]} \left \| S^{\star,1} _t - \frac{1}{n}\sum_{j} S_t ^{\star,j} \right \|_{L^2} \to 0,\quad n \to \infty.\]
The limit
\begin{equation} \sup_{t \in [0,T]} \left \| \psi^1 _t - \frac{1}{n}\sum_{j} \psi_t ^j \right \|_{L^2} \to 0, \quad n \to \infty,\label{conv-L2-psi}\end{equation}
can be obtained as follows: since it satisfies a linear BSDE, the process $\psi^i$ can be represented as
\[ \psi_t ^i = \mathbb E \left[ \frac{\Gamma_T}{\Gamma_t} h_1 + \int_t ^T \frac{\Gamma_s}{\Gamma_t} C_s ^i ds \Big | \mathcal F_t \right],\quad t \in [0,T],\]
where we set
\begin{align*}
d\Gamma_t = &-\Gamma_t \frac{\phi_t}{A+K} dt, \quad \Gamma^0 =1,\\
C_t ^i = & -\frac{\phi_t}{A+K} \left[ KQ_t ^i + p_0 + \pi p_1 Q_t^{st}+ (1-\pi)p_1 (\widehat Q_t + \hat \alpha_t) + J_t ^\theta (f_0 + f_1 (\widehat{\tilde Q_t} + \widehat \alpha_t - \bar \alpha )) \right]. 
\end{align*}
Therefore, we get
\[ \frac{1}{n}\sum_i \psi_t ^i = \mathbb E \left[ \frac{\Gamma_T}{\Gamma_t} h_1 + \int_t ^T \frac{\Gamma_s}{\Gamma_t} \left( \frac{1}{n}\sum_i C_s ^i \right)ds \Big | \mathcal F_t \right],\]
so that, using the fact that $\Gamma$ is bounded,
\[ \left| \psi_t ^1 - \frac{1}{n}\sum_i \psi_t ^i \right| \le C \mathbb E\left[ \sup_{s\in [0,T]} \left |Q_s ^1 - \frac{1}{n}\sum_i Q_s ^i\right| \Big | \mathcal F_t\right],  \] for some constant $C>0$. Hence, from above we can deduce the convergence \eqref{conv-L2-psi} from the analogue for $Q^i$.
We now exploit \eqref{expl-S} to get the following estimate
\[ \sup_{t \in [0,T]} \left \| S^{\star,1} _t - \frac{1}{n}\sum_{j} S_t ^{\star,j} \right \|_{L^2} \le C'' \left( \sup_{t \in [0,T]} \left \| \widehat Q^1 _t - \frac{1}{n}\sum_{j} Q_t ^j \right \|_{L^2} + \sup_{t \in [0,T]} \left \| \psi^1 _t - \frac{1}{n}\sum_{j} \psi_t ^j \right \|_{L^2} \right), \]
for some constant $C'' >0$.  This concludes the proof of item 1.

Finally, using similar arguments we can also show the limit in item 2., i.e.
\[ \limsup_{n \to \infty} \sup_{\alpha^i \in \mathcal A} J_n ^i (\alpha^i , \alpha^{\star,-i}) \geq J^{MFG}(\alpha^{\star}).\]
The final statement is a straightforward consequence of items 1 and 2 as already explained at the beginning of this proof.
\end{proof}

\section{Numerical approach and results}
In this section, we describe the methodology used for the numerical approximation of the  \textit{mean-field game equilibrium} and the \textit{mean-field game optimal control}  in the linear-quadratic case and provide some numerical results and interpretations.
\subsection{Numerical implementation} We propose an implementable numerical scheme which is based on the approach introduced in \cite{lejay2014} (also used in e.g. \cite{dl2016}, \cite{dl2016-1}) for the approximation of the solution of a Lipschitz BSDE, driven by a Brownian motion and a compensated Poisson process. For sake of clarity, we describe first the method used for the approximation of the optional projection of the optimal control, i.e. $\widehat{\alpha}$. More specifically, this method is based on the approximation of the Brownian motion $W^0$ and the compensated Poisson process $\widetilde{N}^0$ by two independent random walks. For $n\in \mathbb{N}$, we introduce the first random walk $\{W_k^{0,n}: k=0,\ldots ,n\}$ which is given by
\[W_0^{0,n} = 0, \quad W_k^{0,n} = \frac{1}{\sqrt{n}}\sum_{i=1}^k \epsilon_i^n \quad k=1,\ldots,n,\]
where $\epsilon^n_1,\ldots,\epsilon^n_n$ are independent symmetric Bernoulli random variables:
\[\mathbb{P}(\epsilon^n_k =1) = \mathbb{P}(\epsilon^n_k =-1) = 1/2, \quad k=1,\ldots,n.\]
The second random walk $\{\widetilde{N}_k^{0,n}: k=0,\ldots,n\}$ is non symmetric, and given by
\[\widetilde{N}_0^{0,n}=0, \widetilde{N}_k^{0,n} = \sum_{i=1}^k \eta_i^n  \quad k=1,\ldots,n,\]
where $\eta_1^n, \ldots,  \eta_n^n$ are independent and identically distributed random variables with probabilities, for each $k$, given by
\[\mathbb{P}(\eta_k^n =\kappa_n-1) = 1-\mathbb{P}(\eta_k^n =\kappa_n) = \kappa_n, \quad k=1,\ldots,n,\]
with $\kappa_n=e^{-\frac{\lambda}{n}}$. We assume that both sequence $\epsilon_1^n,\ldots,\epsilon_n^n$ and $\eta_1^n,\ldots,\eta_n^n$ are defined on the original probability space $(\Omega,\mathcal{F},\mathbb{P})$ (that can be enlarged if necessary), and that they are mutually independent. The (discrete) filtration in the probability space is $\mathbb{F}^n=\{\mathcal{F}^n_k:k=0,\ldots,n\}$ with $\mathcal{F}^n_0 = \{\Omega,\emptyset\}$ and $\mathcal{F}^n_k = \sigma\{\epsilon^n_1,\ldots,\epsilon^n_k,\eta_1^n, \ldots,  \eta^n_k\}$ for $k=1,\ldots,n$. In the discrete stochastic basis, given an $\mathcal{F}_{k+1}^n$ random variable $y_{k+1}$, in order to represent the martingale difference $\mu_{k+1}:=y_{k+1}-\mathbb{E}(y_{k+1}|\mathcal{F}_k^{n})$ we need a set of three orthogonal martingales. For this reason, we introduce a third martingale increments sequence $\{\mu_k^n:=\epsilon_k^n \eta_k^n : k=0,\ldots,n\}$. In this setting, there exist unique $z_k, u_k, v_k$ such that
\begin{align}\label{REF}
\mu_{k+1}=y_{k+1}-\mathbb{E}(y_{k+1}|\mathcal{F}_k^{n})=\frac{1}{\sqrt{n}}z_{k}e_{k+1}^n+u_{k}\eta_{k+1}^n+v_k \mu^n_{k+1}.
\end{align}
We now illustrate our numerical algorithm.

\paragraph{Numerical computation of $\widehat{\alpha}$.} Let $\delta:=\frac{T}{n}$, where $n$ represents the number of time discretization points  and let $\{ t_j=j\delta;\,\,\,j=0,\ldots,n$\}.
\begin{itemize}
\item [\underline{Step 1}] As in \cite{lejay2014} Section 4 (see also \cite{slominski1989}), we simulate  the discrete time SDEs $\widehat{q}^n$, $\widehat{q}^{st,n}$ and $R^n$ (which converge to $\widehat{Q}$,  $\widehat{Q}^{st}$ and $R$ in probability in the $J_1$-Skorokhod topology) as follows: Set $q_0^n=q_0$ (resp. $q_0^{st,n}=q_0^{st,n}$ and $R_0^n=R_0$) and for $i=0,\ldots,n,$ define the discrete time SDEs:
\begin{align}\label{discreteSDE}
\begin{cases}
    \widehat{q}_{i+1}^n=\widehat{q}_{i}^n+\delta \mu \widehat{q}_{i}^n+\sqrt{\delta} \sigma \widehat{q}_{i}^n e_{i+1}^n;\\
    \widehat{q}_{i+1}^{st,n}=\widehat{q}_{i}^{st,n}+\delta \mu^{st} \widehat{q}_{i}^{st,n}+\sqrt{\delta} \sigma^{st} \widehat{q}_{i}^{st,n} e_{i+1}^n;\\
    R_{i+1}^n=R_{i}^n+\delta (1+\lambda^0) R_{i}^n+R_{i}^n \eta_{i+1}^n,
\end{cases}
\end{align}
where $q_i:=q_{t_i}$, for $i=0,\ldots,n$ (this notation is adopted for all discrete time processes).

\item [\underline{Step 2}] 
Since the unique solution $\bar{\phi}$ of the Riccati BSDE \eqref{phihat} is positive bounded (see Theorem \ref{Riccati}), it can be approximated by a discrete time BSDE with jumps $\widetilde{\phi}^{n}$, using the  algorithm proposed in \cite{lejay2014} for Lipschitz BSDEs with jumps:

\begin{itemize}
 \item [$\bullet$] Set the terminal condition $\widetilde{\phi}^{n}_n = h_2$.
 \item [$\bullet$] For $k$ from $n-1$ down to 0, solve the discrete time BSDE
 \begin{align*}
 \widetilde{\phi}^{n}_k &=\widetilde{\phi}^{n}_{k+1} + \delta \left[C-\frac{1}{A+(1-\pi)p_1+K+f_1J_k^{\theta,n}}\left(\widetilde{\phi}_k^{n}\right)^2\right] \nonumber \\&-\sqrt{\delta} z_k^{n} e_{k+1}^n-u_k^n \eta_{k+1}^n-v_k^n \mu_{k+1}^n,
 \end{align*}
 with $J_k^{\theta,n}=\textbf{1}_{R^n_k \leq \theta}$,\,\,$k=\overline{0,\ldots,n}$.
 
 In view of the representation \eqref{REF}, the above equation is equivalent to:
 \begin{align}\label{eq}
 \widetilde{\phi}^{n}_k &=\mathbb{E}\left(\widetilde{\phi}^{n}_{k+1}\big| \mathcal{F}_k^n\right) \nonumber \\ &  +\delta\left[C-\frac{1}{A+(1-\pi)p_1+K+f_1J_k^{\theta,n}}\left(\widetilde{\phi}_k^{n}\right)^2\right]
 \end{align}
 and $\widetilde{\phi}^{n}_k$ can be obtained by a fixed point principle. At each time step, one needs to
 compute $\mathbb{E}\left(\widetilde{\phi}^{n}_{k+1}\big| \mathcal{F}_k^n\right)$, which is done using the formula
 \be
\mathbb{E}\left(\widetilde{\phi}^{n}_{k+1}\big| \mathcal{F}_k^n\right) &=& \frac{\kappa_n}{2} \widetilde{\phi}^{n}_{k+1}(\epsilon^n_1,...,\epsilon^n_k,1,\eta_1^n, \ldots,  \eta^n_k, \kappa_n-1)\nonumber\\
&+& \frac{\kappa_n}{2} \widetilde{\phi}^{n}_{k+1}(\epsilon^n_1,\ldots,\epsilon^n_k,-1,\eta_1^n, \ldots,  \eta^n_k, \kappa_n-1)\nonumber\\
&+& \frac{1-\kappa_n}{2} \widetilde{\phi}^{n}_{k+1}(\epsilon^n_1,\ldots,\epsilon^n_k,1,\eta_1^n, \ldots,  \eta^n_k, \kappa_n)\nonumber\\
&+& \frac{1-\kappa_n}{2} \widetilde{\phi}^{n}_{k+1}(\epsilon^n_1,\ldots,\epsilon^n_k,-1,\eta_1^n, \ldots,  \eta^n_k, \kappa_n).\nonumber
 \ee

\end{itemize}

\item [\underline{Step 3}] Approximate the solution $\bar{\psi}$ of the linear BSDE (\ref{psihat}) by a discrete time BSDE with jumps $\widetilde{\psi}^{n}$, using the same algorithm as in \textit{Step 2}. 
\begin{align*}
\begin{cases}
\widetilde \psi_n^{n} &= h_1\\
\widetilde \psi^{n}_{i} &= \mathbb{E}(\widetilde \psi^{n}_{i+1}|\mathcal{F}_i^{n})-\delta \frac{\widetilde{\phi}^{n}_i}{A+(1-\pi)p_1+K+f_1J_{i}^{n,\theta}}\left[p_0+\pi p_1 \widehat{q}_{i}^{st,n} + ((1-\pi)p_1+K)\widehat {q}^n_i\right.\\ & \left. + J_i^{n,\theta}(f_0+f_1(\widehat{q}^n_i-\mathbb{E}[\widehat q^n_i]-\bar{\alpha}))+\widetilde \psi^{n}_i\right],\,\, i=\overline{n-1,\ldots,0}. \\
\end{cases}
\end{align*}
\item [\underline{Step 4}] Compute $\widehat{S}^{n}_{i}$ as follows
\begin{align*}
\begin{cases} 
\widehat S^{n}_{0} & = s_0 \\
\widehat{S}^{n}_{i+1}&=\widehat{S}^{n}_{i}+\delta \left[\widehat{S}^{n}_{i}\widetilde{\phi}^{n}_i+\widetilde \psi^{n}_{i}-\frac{1}{K_{i}^{n,\theta}}\left(p_0+\pi p_1 \widehat{q}_{i}^{st,n} + ((1-\pi)p_1+K)\widehat {q}^n_i\right. \right.\\ &\left. \left. +J_i^{n,\theta}(f_0+f_1(\widehat{q}^n_i-\mathbb{E}[\widehat q^n_i]-\bar{\alpha}))  \right)\right], \,\, i=\overline{0,\ldots,n-1},
\end{cases}
\end{align*}
with $K_{i}^{n, \theta}= A+(1-\pi)p_1+K+f_1J_{i}^{n,\theta},\,\,i=\overline{0,\ldots,n}$.

\item [\underline{Step 5}] Compute $\widehat{\alpha}^{n}_i$, for $i=0,\ldots,n$ by 
\begin{align*}
\widehat \alpha^{n}_i &= -\frac{1}{K_i^{n,\theta}} \left( p_0 + \pi p_1\widehat q^{st,n}_i + ((1-\pi)p_1+K)\widehat q^n_i + \widetilde \phi^{n}_i \widehat{S}_i ^{n} +  \widetilde \psi^n _i \right. \nonumber \\
& \left.\hspace{2cm} + \left\{f_0 +f_1(\widehat{q}^n_i-\mathbb{E}(\widehat{q}^n_i)- \bar \alpha)\right\}J_i ^{n,\theta}\right).
\end{align*}
\end{itemize}

\paragraph{Numerical computation of $\alpha$.} The numerical approximation of the optimal control $\alpha$ follows the same steps as in the case of $\widehat{\alpha}$, the additional complexity being given by the fact that we have an additional Brownian motion $W$, which has to be approximated by using another random walk independent of the first two ones.

\subsection{Numerical experiments}    

\paragraph{Parameters estimation} First of all, in order to estimate the dynamics of $\widehat Q$ and $Q$, we use the half-hour electricity data of 300 homes with rooftop solar\footnote{https://www.ausgrid.com.au/Industry/Our-Research/Data-to-share/Solar-home-electricity-data} recorded in Australia from 2012 to 2013. In our simulations, we consider  a typical weekday in July and make the assumption that every weekday in July 2012 has the same seasonality. Therefore, the seasonality is estimated by computing the mean of $\widehat Q$ ($\widehat Q$ being the average of the 300 homes consumption) for every weekday in July. The volatilities of the common and idiosyncratic noises are estimated from the differences between the estimated seasonality and the realizations of all residential households every weekday in July. As a result, we obtain $\sigma^{st} = \sigma^{0} = 0.31$ and $\sigma = 0.56$. The realised $\widehat Q$ (resp. $Q$) and the simulated $\widehat Q$ (resp. $Q$) are shown in Figure \ref{fig:hat_Q} (resp. Figure \ref{fig:Q}).

\begin{figure}[!ht]
    \center
    \subfigure{
        \includegraphics[scale=0.5]{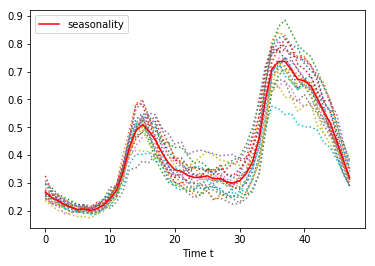}
    }
    \subfigure{
        \includegraphics[scale=0.5]{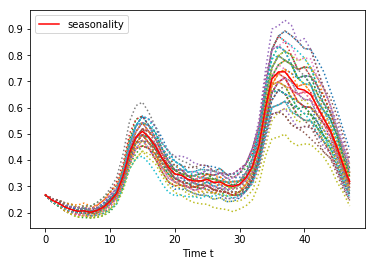}
    }
    \caption{Trajectories of $\widehat Q$ (in kW) with estimated seasonality over 48 half-hours in a weekday in July.}
    \label{fig:hat_Q}
\end{figure}

\begin{figure}[!ht]
    \center
    \subfigure{
        \includegraphics[scale=0.5]{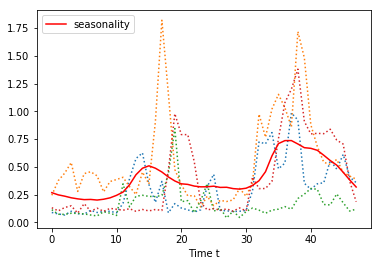}
    }
    \subfigure{
        \includegraphics[scale=0.5]{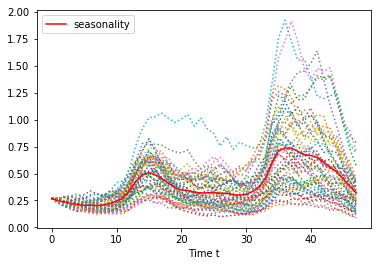}
    }
    \caption{Trajectories of $Q$ (in kW) with estimated seasonality over 48 half-hours in a weekday in July.}
    \label{fig:Q}
\end{figure}
We recognize on the estimated seasonality the typical pattern of households consumption: a peak of power consumption in the morning and a second peak in the evening when people go back home. It can be easily observed that the simulated trajectories look very similar to the observations for $\widehat Q$. The simulated trajectories of $Q$ look less satisfactory than for $\widehat Q$ but we leave for further research the definition of a better model as our focus here is to provide a complete mathematical and numerical treatment of the linear-quadratic case.

To estimate the parameters corresponding to the price function $p$, we use the historical data of global consumption  and spot prices in France \footnote{source: French TSO https://www.services-rte.com} and obtain that $p_0 = 6.16 $ \euro \unskip~ and $p_1 = 0.65$ \euro/GWh (see Figure \ref{fig:price_demand}).

\begin{figure}[!ht]
    \center
    \includegraphics[scale=0.5]{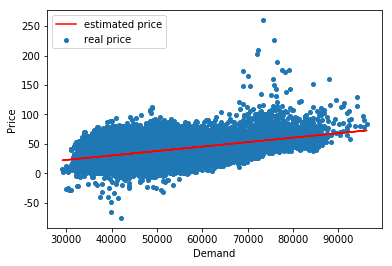}
    \caption{Price (in \euro /MWh) - demand curve (in MW).}
    \label{fig:price_demand}
\end{figure}

 We consider a time horizon $T$ of two days (i.e. 96 half-hours or 48 hours). The parameters are set to $A=150$, $C=80$, $K=50$, $h_0=h_1=0$ and $h_2=100$, which means that each consumer targets a null value for $S$ at the end of the period. The interruptible load contract is set at $\widehat \alpha = 0.1$ kW, a delay \textcolor{black}{$\theta=3 \text{ hours}$} and $\lambda = 2$, $f_0=0$ and $f_1 = 10000$. In particular, the value $\lambda^0 =2$ implies an average number of jumps of 4 over the horizon $T=2$ days \textcolor{black}{(i.e. 48 hours)}. We consider the population to be equally shared ($\pi=0.5$) between \textit{standard consumers} and \textit{consumers with DSM contracts}, the  standard consumers being assumed to have the same consumption dynamics as those with DSM contracts.

\paragraph{Numerical illustrations}
We present the results corresponding to one set of trajectories, which are represented in Figure \ref{fig:trajectoryQJ}. 
\begin{figure}[!ht]
    \center
    \subfigure{
        \includegraphics[scale=0.5]{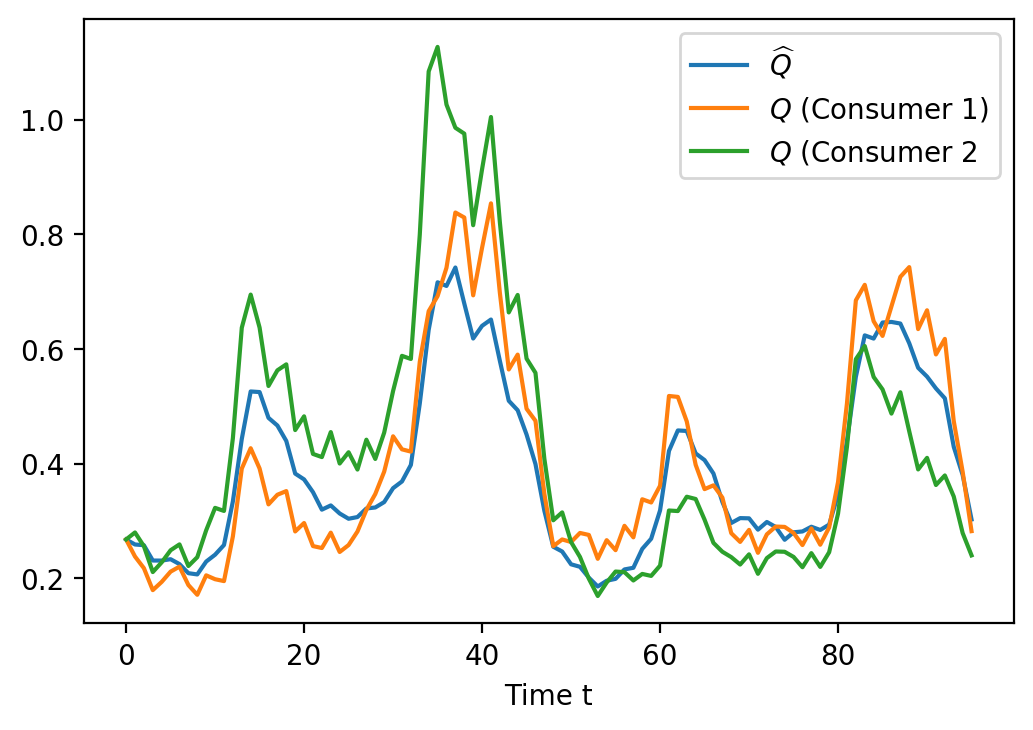}
    }
    \subfigure{
        \includegraphics[scale=0.5]{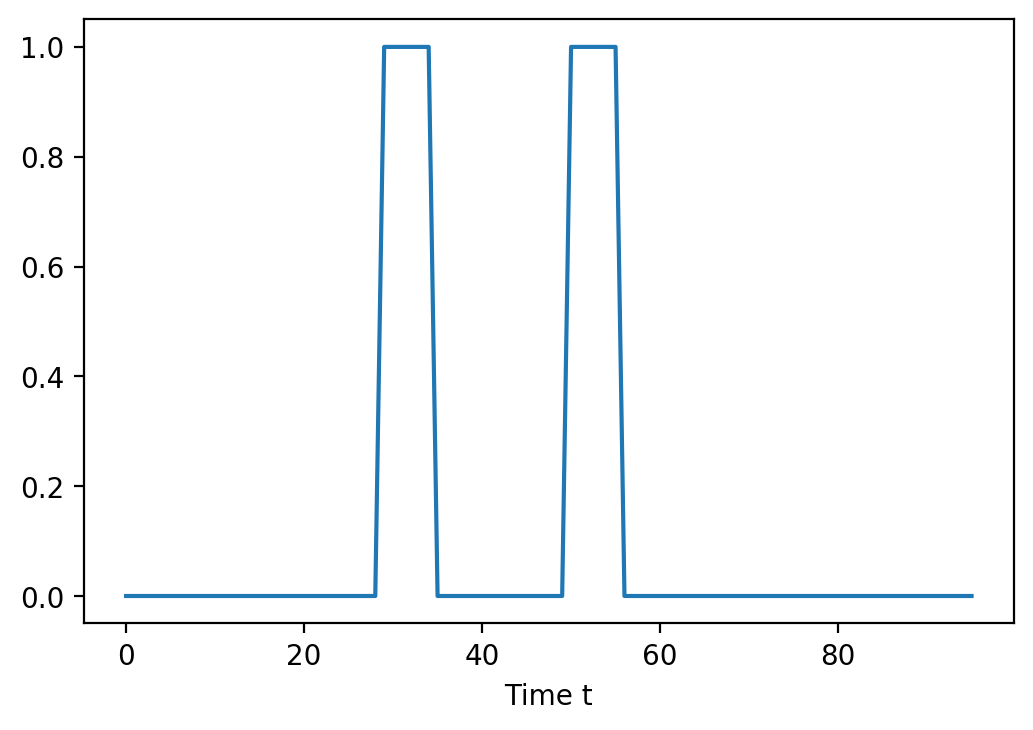}
    }
    \caption{One trajectory of $\widehat Q$  and $Q$ (in kW) for two different consumers (left) and one trajectory of $J$ (right) along time (in half-hours).}
    \label{fig:trajectoryQJ}
\end{figure}
On this set of trajectories, the consumption is globally more important during the first day compared with the second day. Consumer 1 is globally less consuming than consumer 2 and also less than the average of all consumers during the first day, whereas this effect is reversed during the second day. Consumers are faced with two jumps related to the DSM contract during the 48-hour time horizon. 

\paragraph{Numerical results for Real Time Tariff and no Demand-Side Management.}
We first present results when consumers only have RTP tariff and no divergence (i.e. $f_1 = 0$). Figure  \ref{fig:trajectoryQ_alpha_P1only} shows the resulting consumption $Q+\alpha$ for both situations MFG and MFC.  First, we notice that in the MFC the effort $\alpha$ which are made by consumers with DSM contract are more important:  they reduce more their consumption during high demand moment compared to the MFG situation. Indeed, in the MFC case, DSM consumers' action benefits to themselves but also to the standard consumers. Therefore, when they reduce their consumption, they actually reduce power prices which is also beneficial to standard consumers, leading them to make more effort. We also observe the typical \textit{valley-filling phenomenon} we expected: consumers transfer some part of their consumption from high-consumption periods corresponding to high-price periods to low-consumption periods. By doing so, they reduce the cost they have to pay for their consumption. Let us note that in the MFG case, the consumers increase their consumption (by having a positive $\alpha$) during the first hours when their consumption is naturally low but that they also increase their consumption at the first morning peak which may seem less natural. But this can be explained as the initial value of $S$ at time $0$ is $-0.5$ for this simulation. This typically represents the situation when consumers start the period with some postponed consumption they are trying to catch up. This effect is less visible in the MFC situation.

\begin{figure}[!ht]
    \center
    \subfigure{
        \includegraphics[scale=0.5]{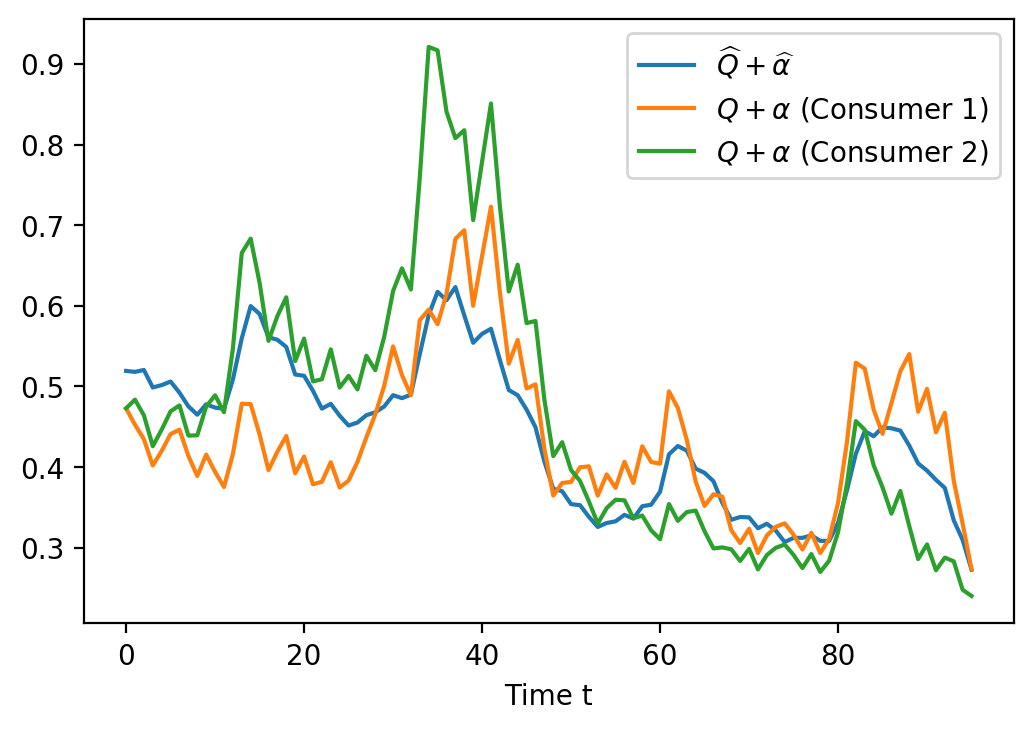}
    }
    \subfigure{
        \includegraphics[scale=0.5]{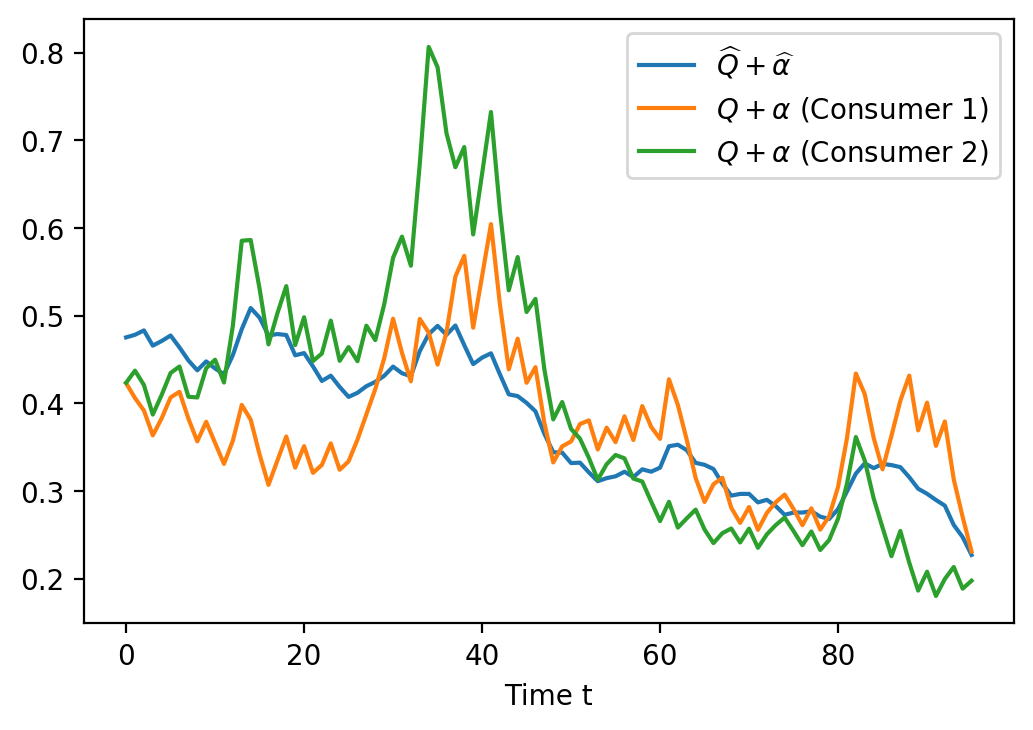} 
    }
    \caption{Trajectories of $\widehat Q + \widehat \alpha$  and $Q+\alpha$ (in kW) for two different consumers for MFG (left) and corresponding trajectories for MFC (right) when $f_1=0$ along time (in half-hours).}
    \label{fig:trajectoryQ_alpha_P1only}
\end{figure}

The consumers' actions have a direct impact on spot prices $p$ represented in Figure \ref{fig:trajectory_price_P1only}. We plot the impact of different proportions $\pi$ of standard consumers in the system. It is clear that the lower $\pi$, the more active consumers in the system are, which implies that their impact is collectively more important. The valley-filling of consumption has a smoothing effect on prices: the peak prices are attenuated, whereas the low prices increase. This effect is more important for MFC for the reasons already given. 
\begin{figure}[!ht]
    \center
    \subfigure{
        \includegraphics[scale=0.5]{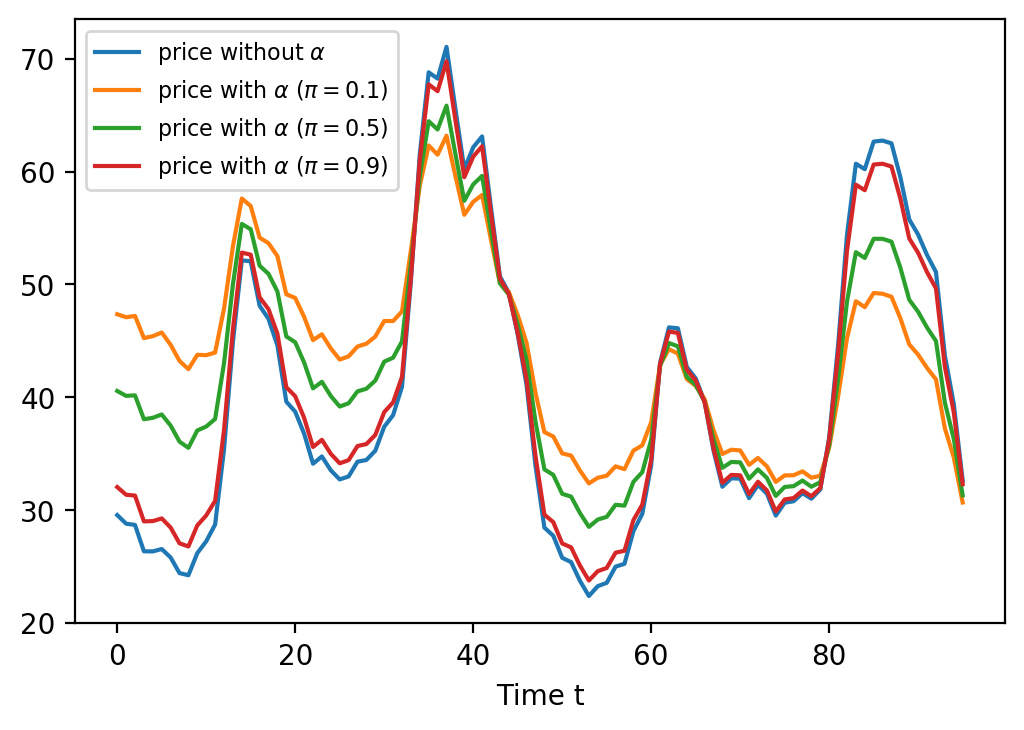}
    }
    \subfigure{
        \includegraphics[scale=0.5]{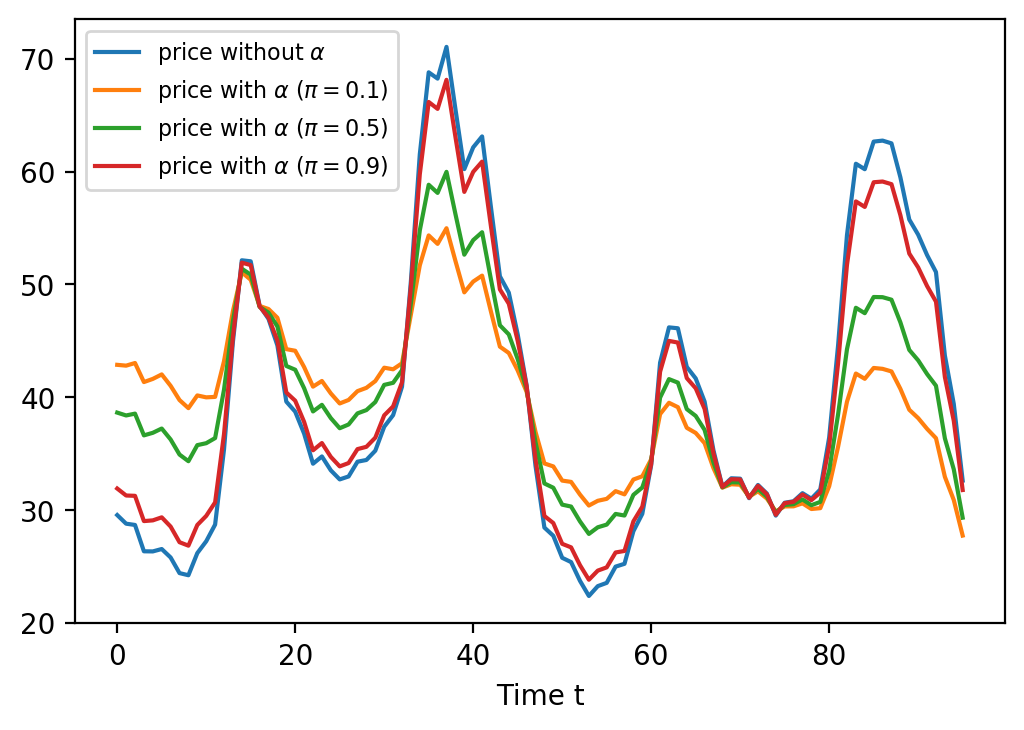} 
    }
    \caption{Trajectories of price $p$ for three different proportions of active consumers for MFG (left) and corresponding trajectories for MFC (right) when $f_1=0$ along time (in half-hours).}
    \label{fig:trajectory_price_P1only}
\end{figure}

Another optimization problem, called $\text{MFC}^{agg}$, could be formulated to represent \emph{the point of view of the aggregator} who coordinates all the DSM consumers, without being interested in the rest of the population:
\begin{align}
V^{C^{agg}}= &\inf_{\alpha \in \mathcal{A}} \mathbb{E} \left[\int_{0}^{T} \left( g(\alpha_t, S_t, Q_t)  + (Q_t+\alpha_t )p\left(\pi Q^{st}+(1-\pi)\widehat{Q}_t + \widehat{\alpha}_t \right)  \right.\right. \nonumber\\ 
& \hspace{1cm} \left.\left. + l(Q_t+\alpha_t)+ J_t^\theta (\tilde Q_t + \alpha_t - \bar \alpha) f \left(\widehat{\tilde Q_t} + \widehat \alpha_t   - \bar \alpha \right) \right) dt +h(S_T)  \right],
\label{eq:MFC_aggregator_objective}
\end{align}
with the function $p$ of class $C_b^1$ and $g,f,l,h$ satisfying Assumption \ref{assump_2.1}.

By taking the point of view of the aggregator of DSM consumers (the associated value of the optimization problem being given by $V^{C^{agg}}$), the equilibrium can also be characterized in the same way as it has been done for the social planner $V^{C}$ in previous section. Furthermore, by comparing the coupling conditions associated to the MFG problem and to $\text{MFC}^{agg}$, it follows that if $\alpha^{\star}_{MFC^{agg}}$ is a mean field optimal control for the problem with pricing rules $p_{MFC^{agg}}$ and $f_{MFC^{agg}}$ then $\alpha^{\star}_{MFC^{agg}}$ is a MFG Nash equilibrium for the MFG  with pricing rules: 
\begin{align}\label{agg}
    p^{MFG}:[0,T] \times {L}^2 \to {L}^2 \nonumber\\
    p_{t}^{MFG}(Q)=\Tilde{p}(t,\pi Q_t^{st}+(1-\pi)Q),
\end{align}
with $\Tilde{p}$ given by:
    $$\Tilde{p}(t,Q)(\omega)=p_{MFC^{agg}}(Q(\omega)) + \left(Q(\omega)-\pi Q_t^{st}(\omega)\right)p_{MFC^{agg}}^{'}(Q(\omega)).$$
In the linear quadratic setting, choosing $p_{MFC^{agg}}(x)=p_0+p_1x$, we obtain $p_t^{MFG}(Q)=p_0+2p_1(1-\pi)Q+p_1\pi Q_t^{st}$ and the results obtained in this section remain valid for this price function.

Centralized decisions might be difficult to implement in practice as they require a large amount of information to be sent from the aggregator to the agents and the other way round. This relation between  the aggregator's problem and an MFG problem enables to implement the decision of the aggregator through a decentralized setting by letting the agents play the MFG with the modified pricing rules given by \eqref{agg}.

The point of view of the aggregator $\text{MFC}^{agg}$ (\ref{eq:MFC_aggregator_objective}) produces intermediary efforts between the MFG and the MFC. The efforts produced are illustrated in figure  \ref{fig:trajectoryQ_alpha_P1only_Aggregator}. This result is expected as the MFG focuses its objective on one single consumers, the MFC on the total consumers, i.e. the standard and DSM consumers whereas the $\text{MFC}^{agg}$ is in between as it is interested in all DSM consumers but not in standard ones.

\begin{figure}[!ht]
    \center
    \includegraphics[scale=0.5]{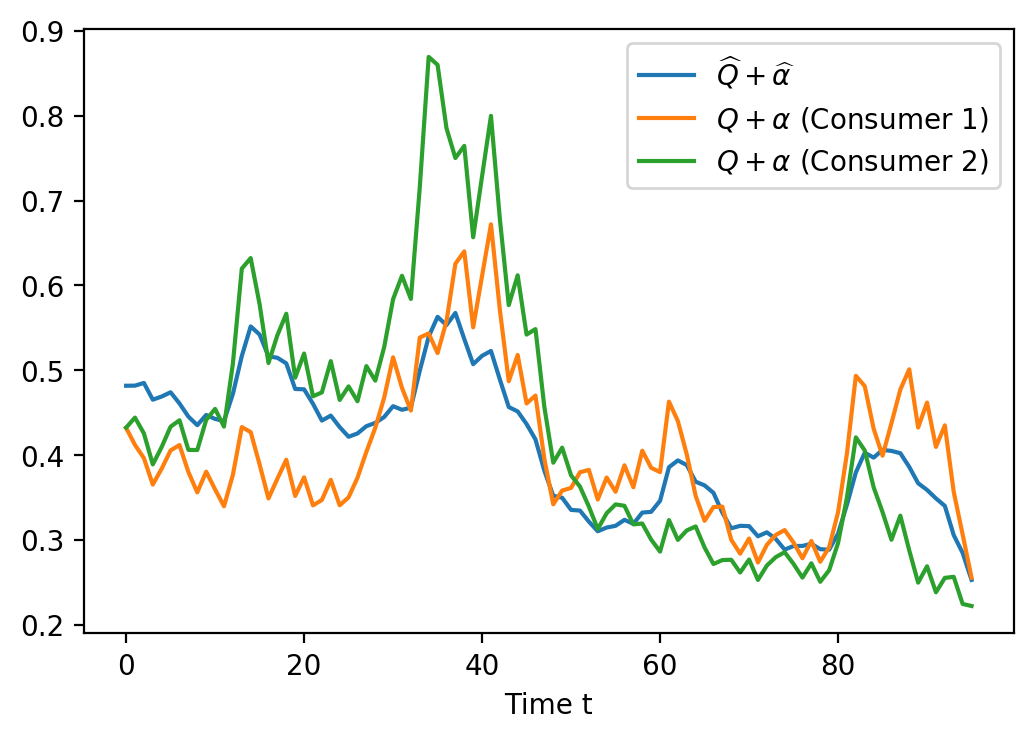}
    \caption{Trajectories of $\widehat Q + \widehat \alpha$  and $Q+\alpha$ (in kW) for two different consumers from Aggregator point of view along time (in half-hours).}
    \label{fig:trajectoryQ_alpha_P1only_Aggregator}
\end{figure}

\paragraph{Numerical results for Demand-Side Management but no Real Time Tariff.}
We now illustrate the situation in which the active consumers only have a DSM incentives but no RTP (i.e. $p_1=0$). In Figure \ref{fig:trajectory_tilde_Q}, we observe that in the MFG case the response of all active consumers ($\widehat{\tilde Q} + \widehat \alpha$) is exactly at the expected DSM level $\bar \alpha$. Each single consumer is not exactly at the target because of their individual constraints and personal situation, but as a whole they manage to satisfy the contract. The results in the  MFC case are not presented as they are quite similar to those obtained for the MFG setting. Indeed, both optimisation problems are very similar since the price $p$ is null. 

\begin{figure}[!ht]
    \center
    
    \includegraphics[scale=0.6]{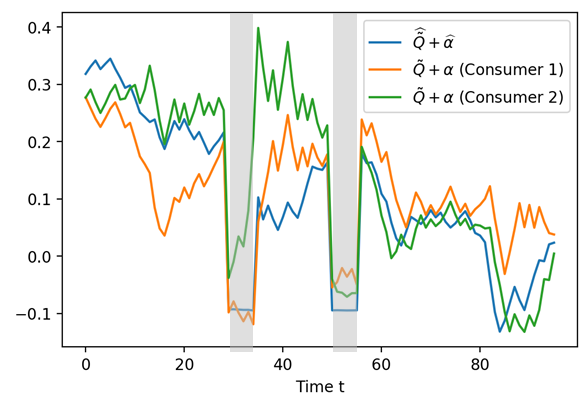}
    
    \caption{Trajectories of $\widehat{ \tilde Q}+ \widehat \alpha$ and $\tilde Q + \alpha$ in the MFG case along time (in half-hours).}
    \label{fig:trajectory_tilde_Q}
\end{figure}

\paragraph{Numerical results for Demand-Side Management and Real Time Tariff.}
When both DSM and RTP are combined, the influence of jumps is still clearly noticeable. In Figure \ref{fig:trajectory_Q_alpha}, we observe that $\widehat{\tilde Q} + \widehat \alpha$ matches $\bar \alpha$ during the jumps episodes for both MFG and MFC cases. During the other periods, i.e. without jumps, consumers react in a very similar manner to what was observed for the simulation with $f_1=0$. The resulting spot price is presented in Figure \ref{fig:trajectory_price}, in particular the price drops during the jumps because the global consumption is reduced. 
\begin{figure}[!ht]
    \center
    \subfigure{
        \includegraphics[scale=0.6]{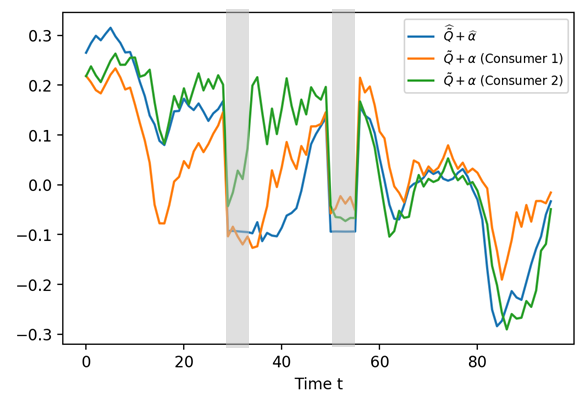}
    }
    \subfigure{
        \includegraphics[scale=0.6]{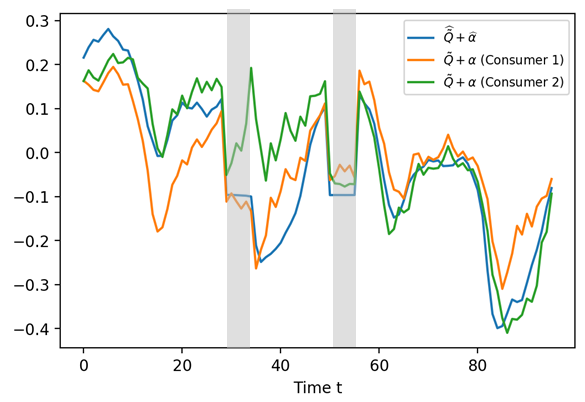}
    }
    \caption{Trajectories of $\widehat{\tilde{Q}} + \widehat \alpha$ (in kW) and $\tilde{Q}+\alpha$ for two different consumers for MFG (left) and for MFC (right) along time (in half-hours).}
    \label{fig:trajectory_Q_alpha}
\end{figure}

\begin{figure}[!ht]
    \center
    
    \includegraphics[scale=0.5]{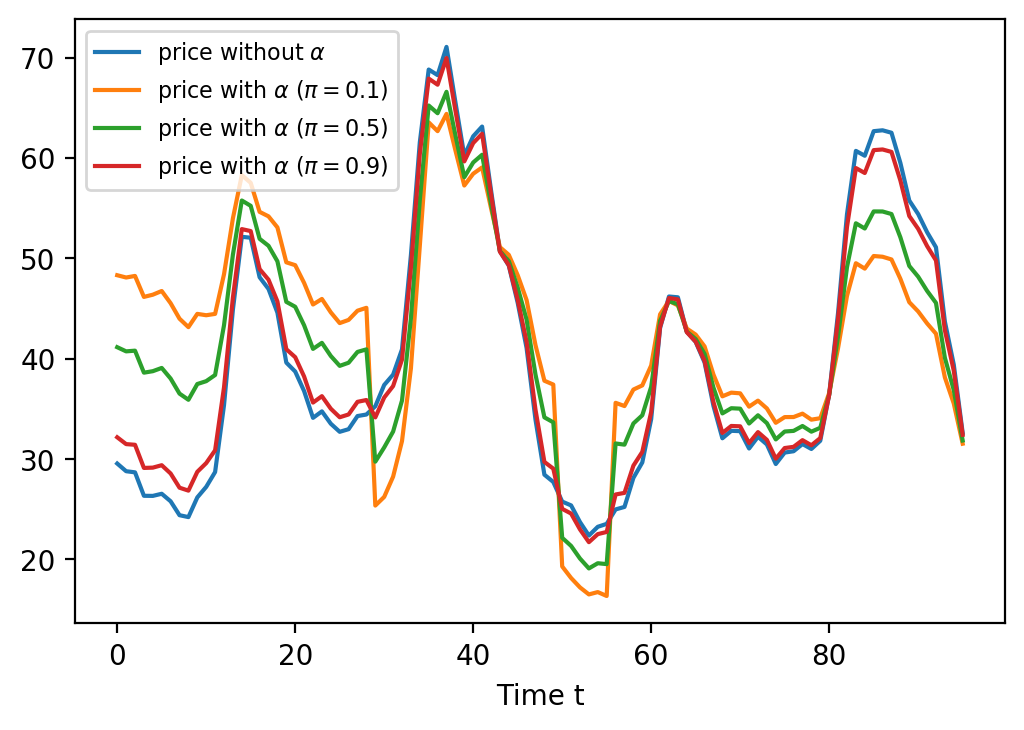}
    
    \caption{Trajectories of price $p$ for different proportion $\pi$ of standard consumers in the system in the MFG setting (jumps episodes are highlighted in grey) along time (in half-hours).}
    \label{fig:trajectory_price}
\end{figure}

\bibliographystyle{siam} 
\bibliography{bibliography}
\end{document}